\newtheorem{proposition}{Proposition}
\newtheorem{remark}[proposition]{Remark}
\newcommand{\authoryearcite}[1]{[\citeauthor*{#1}~(\citeyear{#1})]}
\newcommand{\revision}[1]{#1}
\newcommand{\setstyle}[1]{{\mathbb #1}}
\newcommand{\norm}[2]{{\| #1 \|}_{#2}}
\def\setR {\setstyle{R}}
\def\setN {\setstyle{N}}
\def\ps@pprintTitle{
 \let\@oddhead\@empty
 \let\@evenhead\@empty
 \def\@oddfoot{}
 \let\@evenfoot\@oddfoot}
\title{Data-driven geometric parameter optimization for PD-GMRES}
\author[inst]{Lennart Duvenbeck}
\ead{lennart.duvenbeck@ians.uni-stuttgart.de}
\author[inst]{Cedric Riethm\"uller\corref{cor1}}
\ead{cedric.riethmueller@ians.uni-stuttgart.de}
\author[inst]{Christian Rohde}
\ead{christian.rohde@ians.uni-stuttgart.de}
\affiliation[inst]{organization={
Institute of Applied Analysis and Numerical Simulation,
University of Stuttgart},
            city={70569 Stuttgart}, 
            country={Germany} \\
            (Emails:  $\{\text{lennart.duvenbeck, cedric.riethmueller, christian.rohde}\}$@ians.uni-stuttgart.de)
            }
\begin{document}

\begin{abstract}
Restarted GMRES is a robust and widely used iterative solver for linear systems. The control 
of the restart parameter is a key task to accelerate convergence and to prevent the  well-known stagnation phenomenon. We focus on the Proportional-Derivative GMRES (PD-GMRES), which has been derived using control-theoretic ideas in  \authoryearcite{CuevasNunez2018} as a versatile method for modifying the restart parameter.\\
Several variants of a quadtree-based geometric optimization approach are proposed 
to find a best choice of PD-GMRES parameters.  We show that the optimized 
PD-GMRES performs well across a large number of matrix types and we observe superior performance as compared to major other GMRES-based iterative solvers. Moreover, we propose an extension of the PD-GMRES algorithm to further improve performance by controlling the range of values for the restart parameter.
\end{abstract}

\begin{keyword}
iterative solver \sep PD-GMRES \sep  parameter optimization \sep quadtree \sep data-driven methods
\end{keyword}

\maketitle

\section{Introduction}
The Generalized Minimal RESidual (GMRES) method is a well-established algorithm for iteratively finding   approximations for  the solution $x\in \setR^n$, $n\in \setN$, of the system of linear equations 
\begin{equation}\label{lineq}
Mx = b,
\end{equation}
with $M\in \setR^{n \times n} $, $\det(M) \neq 0$, and  $b\in \setR^n$   \cite{Saad2003,GMRES}. GMRES belongs to the class of Krylov subspace methods. It works by finding an approximate solution $x_\ell \in x_0 + K_\ell$, where $K_\ell = \text{span}\{ r_0, Mr_0, \dots, M^{\ell-1}r_0\}$ is the $\ell$-th Krylov subspace for the residual $r_0 = Mx_0 - b$ of the initial guess $x_0\in \setR^n$, such that the Euclidean norm of the residual $ \norm{r_\ell}{2} = \norm{Mx_\ell - b}{2}$ is minimized. This ensures that the residual norm does not increase for any step of the iteration. The iterate $x_\ell$ itself is found by solving a $(\ell \times \ell)$-least-squares problem.\\
While GMRES is guaranteed to terminate after $n$ steps with $x_n =x$, this is usually not practical as both the storage and runtime cost of the least-squares problem become unreasonably high. Therefore restarted variants of GMRES are used in practice instead. Such methods use a restart parameter $m \in \setN$ and calculate the (inner) iterations of GMRES only until the $m$-th Krylov subspace. The approximate solution after that step is then taken as the new initial guess and a new Krylov subspace is constructed from the residual to this approximate solution. This method, referred to as GMRES($m$), has the advantage that only $m$ steps of GMRES are performed per restart which constrains the storage and time requirements. However, restarting GMRES in this manner reduces the rate of convergence as information from previous restarts is discarded and only small Krylov subspaces are searched. At its worst, GMRES($m$) can exhibit very slow convergence or even stagnation \cite{GMRES-Stagnation,Meurant2014,Simoncini2010,GMRES-Stagnation2}.  This means that the approximate solution does not improve at all during the search in the $m$-dimensional Krylov subspace, in which case the algorithm does not converge.

As it is difficult to know a-priori how the restart parameter $m$ should be chosen to guarantee a fast rate of convergence, modifications of GMRES($m$) have been employed. One category involves modification of the Krylov subspace by augmentation with elements containing information about previous restarts, by enriching with spectral information and by deflation \cite{GMRES-Augmented2, Cabral2020, Eiermann, GMRES-Augmented1,Simoncini2007}. Another type of modification uses adaptive methods to modify $m$ between restarts of GMRES to avoid the problems which can arise from a poorly chosen fixed restart parameter. Such adaptive GMRES variants adjust $m$ depending on the rate of convergence to improve the runtime of the iteration by increasing $m$ when it is necessary to avoid stagnating or near-stagnating behavior, while avoiding increasing $m$ too much so that solving the least-squares problem remains feasible \cite{Baker,Joubert}.

The specific method we analyze is PD-GMRES \cite{CuevasNunez2018}, which relies on  a Proportional-Derivative controller to determine the restart parameter $m$ based on the residual norms of the iteration after previous restarts. Such a controller is a specific example of a more general class of proportional-integral-derivative (PID) controllers which are well established, error-based tools in control theory \cite{PID-Control2,PID-Control1}.

The PD-Controller used in PD-GMRES contains five free parameters which can be set to provide an optimal rate of convergence (see Section \ref{subsec:parameters} below). For any individual matrix $M$ (or set of matrices) this creates a multidimensional optimization problem with the goal of finding optimal values for the five parameters such that PD-GMRES exhibits fastest performance. Instead of using a direct optimization method such as gradient descent or a derivative-free method such as Nelder-Mead \cite{NelderMead}, we employ a quadtree-based geometric method in order to find local minima for the PD-GMRES runtime, while also gathering a low-resolution overview on the performance across the parameter space (achieving similar goals as~\cite{GeometricApproach}). This is useful to avoid the selection of low-quality local minima. It is in fact necessary for evaluating a set of matrices and determining optimal average performance.

Quadtrees are tree data structures in which each inner node has exactly four children. They are used in  multiple applications where a two-dimensional space needs to be partitioned efficiently \cite{Quadtree}. Such applications for quadtrees (and their three-dimensional equivalent, octrees) include image compression \cite{Quadtree-ImageCompression3,Quadtree-ImageCompression1,Quadtree-ImageCompression2}, video coding \cite{Quadtree-VideoCoding2,Quadtree-VideoCoding1}, collision detection \cite{Quadtree-CollisionDetection1}, mesh generation \cite{Quadtree-MeshGeneration2,Quadtree-MeshGeneration1} and adaptation \cite{Ebeida2008,Huo2019,Jaillet2022}, spatial indexing \cite{Quadtree-SpatialIndexing2,Quadtree-SpatialIndexing1}, terrain triangulation and visualization \cite{Quadtree-Terrain1,Quadtree-Terrain2}, and connected component labeling \cite{Quadtree-CCL1}.

\revision{
In summary, the main contributions of this paper are the following.
\begin{itemize}
 \item We present a general quadtree-based method for evaluation and optimization of functions on rectangular two-dimensional domains.
 \item We employ a heuristic runtime estimate for GMRES-based iterative solvers to avoid volatile runtime measurements.
 \item We develop an optimization procedure based on the quadtree method in which PD-GMRES parameters are grouped and optimized alternately to improve PD-GMRES performance. This method can be applied to arbitrary matrix sets to find optimized parameters.
 \item We apply our optimization procedure to a general set of training matrices and validate the parameters with a set of test matrices. From this we make recommendations for generic PD-GMRES parameter sets which can be used as default values when utilizing PD-GMRES.
\end{itemize}
}

The structure of this work is as follows. In Section \ref{sec:pdgmres} the PD-GMRES algorithm from \cite{CuevasNunez2018} is reviewed. We further show some theoretical results concerning the convergence behavior depending on specific parameter ranges. In Section \ref{sec:optim} we introduce the quadtree-based geometric approach and adapt it to the optimization of the PD-GMRES parameters. This includes the use of averaged heuristic runtime estimates and the grouping of PD-GMRES parameters in pairs to apply our optimization approach, reducing a five-dimensional optimization problem to two dimensions. Section \ref{sec:numer} comprises numerical experiments in which we investigate the application of our geometric approach to optimize PD-GMRES parameters for a set of training matrices. The optimization procedure yields an optimized PD-GMRES that exhibits very good convergence rates for a significant number of matrices. In Section \ref{subsec:extend} we propose an extension of the PD-GMRES algorithm using an additional parameter $m_{\max}$ and showcase that this extended algorithm can provide better performance for a matrix for which the rate of convergence is very slow. We end with the concluding  Section \ref{sec:Conclusion}.

The code and data for this work are publicly available \cite{DARUS-4812_2025}.

\section{The PD-GMRES algorithm}\label{sec:pdgmres}
PD-GMRES has been introduced in \cite{CuevasNunez2018} as an adaptive extension of GMRES($m$).
Starting with an initial vector $x_0$ to $\eqref{lineq}$ and corresponding initial residual $r_0 = Mx_0 - b$ as well as an initial restart parameter $m_1 = m_{\text{init}} \in \setN$, PD-GMRES calls single iterations of GMRES($m_j$) which provide $x_j$ and the corresponding residual $r_j$ for the next restart. The next restart parameter $m_{j+1}$ is determined by the Proportional-Derivative (PD) controller based on the residual norms of the iteration after the previous two restarts (Algorithm \ref{alg:pd_controller}), and GMRES($m_{j+1}$) is then executed on the previous iteration vector $x_j$ to get the next iteration vector $x_{j+1}$. The main PD-control equation is given by 
\begin{equation}
\label{eq:pdcontrol}
    m_{j+1} = m_j + \left\lfloor P_j(\alpha_p) +  D_j(\alpha_d) \right\rfloor.
\end{equation}
In \eqref{eq:pdcontrol} we have the  proportional term
\begin{equation}
 P_j(\alpha_p) = \alpha_p \cdot\frac{\|r_j\|}{\|r_{j-1}\|}
\label{eq:proportional_term}
\end{equation}
and the  derivative term
\begin{equation}
 D_j(\alpha_d) = \alpha_d \cdot\frac{\|r_j\|-\|r_{j-2}\|}{2\|r_{j-1}\|}.
\label{eq:derivative_term}
\end{equation}
Their  relative weight  is balanced by the parameters $\alpha_p, \alpha_d \in \setR$. The iteration continues until the residual norm $\|r_j\| $ is less than a specified tolerance.

\begin{algorithm}
\caption{PD-controller}
\label{alg:pd_controller}
\begin{algorithmic}
\STATE \hspace*{-\algorithmicindent}\textbf{Parameters:} $m_{\text{init}}, m_{\min}, m_{\text{step}}, \alpha_p, \alpha_d$
\REQUIRE{$m_j, r_j, r_{j-1}, r_{j-2}, c$}
\ENSURE{$m_{j+1}, c$}
\IF{$j = 0$}
\STATE{$m_{j+1} = m_{\text{init}}$}
\ENDIF
\IF{$j = 1$}
\STATE{$m_{j+1} = m_j + \left\lfloor P_j(\alpha_p) \right \rfloor$}
\ENDIF
\IF{$j \geq 2$}
\STATE{$m_{j+1} = m_j + \left\lfloor P_j(\alpha_p) +  D_j(\alpha_d) \right\rfloor $} 
\ENDIF
\IF{$m_{j+1} < m_{\min}$}
\STATE{$c = c+1$ \hfill\COMMENT{Global counter of number of times $m_{j+1}$ has fallen below $m_{\min}$}}
\STATE{$m_{j+1} = m_{\text{init}}+c\cdot m_{\text{step}}$}
\ENDIF
\end{algorithmic}
\end{algorithm}

As the control equation~\eqref{eq:pdcontrol} can cause the restart parameter $m_{j+1}$ to become too small to be useful or even negative, an additional parameter $m_{\min} \in \setN$ is introduced as the minimal allowed restart parameter. If $m_{j+1}$ falls below $m_{\min}$ it resets to $m_{\text{init}} + c \cdot m_{\text{step}}$ (as simply resetting to $m_{\text{init}}$ would create a risk of stagnation). Here, $m_{\text{step}} \in \setN$ is an additional parameter which determines by how much this reset value increases and $c$ is a counter for the resets initialized at $0$.

We consider the behavior of the PD-controller depending on the sign of the parameters $\alpha_p$ and $\alpha_d$. For $\alpha_p < 0, \alpha_d > 0$, both the proportional and the derivative term are negative, causing the restart parameter to decrease each iteration as the terms are rounded down. In combination with the reset condition, this creates a behavior where the restart parameter decreases until it reaches a value less than $m_{\min}$ and then jumps to a value higher than $m_{\text{init}}$ with that value increasing with each reset. This allows PD-GMRES to overcome stagnation. In the case of stagnation we have $\frac{\|r_j\|}{\|r_{j-1}\|} \approx 1$ and $\frac{\|r_j\|-\|r_{j-2}\|}{2\|r_{j-1}\|} \approx 0$. Therefore, as long as $\alpha_p \neq 0$ the restart parameter will decrease after each restart and, by jumping to the increasing reset value $m_{\text{init}} + c \cdot m_{\text{step}}$, will eventually reach the matrix dimension $n$, at which point GMRES converges.

\begin{proposition}
	For PD-GMRES with $\alpha_p < 0$ and $\alpha_d \geq 0$, there exists a number $N \in \{0,\ldots,n(n+1)/2\}$ such that $x_N = x = M^{-1}b$.
	\label{prop:1}
\end{proposition}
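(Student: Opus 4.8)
The plan is to make the heuristic discussion preceding the statement rigorous: residual monotonicity of GMRES forces the proportional--derivative update in \eqref{eq:pdcontrol} to be strictly negative on every cycle, so $m$ must keep decreasing until a reset fires; the reset values $m_{\text{init}}+c\,m_{\text{step}}$ grow without bound, and as soon as one of them reaches the matrix dimension $n$ a single further cycle of GMRES returns $x=M^{-1}b$. A count of the cycles spent before that point then yields $N\le n(n+1)/2$.

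First I would record the monotonicity fact. Each call of GMRES($m_j$) started from $x_{j-1}$ minimizes $\|Mx-b\|$ over the affine space $x_{j-1}+K_{m_j}$ with $K_{m_j}=\mathrm{span}\{r_{j-1},Mr_{j-1},\dots,M^{m_j-1}r_{j-1}\}$; since $x_{j-1}$ itself lies in that space, $\|r_j\|\le\|r_{j-1}\|$, hence $\|r_j\|\le\|r_{j-1}\|\le\|r_{j-2}\|$. If $r_j=0$ for some $j$ the claim holds with $N\le j$, so I may assume $r_j\neq0$ for $j<N$; in particular every denominator in \eqref{eq:proportional_term}--\eqref{eq:derivative_term} is positive. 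Then $\alpha_p<0$ and $0<\|r_j\|/\|r_{j-1}\|\le1$ give $P_j(\alpha_p)<0$, while $\alpha_d\ge0$ and $\|r_j\|-\|r_{j-2}\|\le0$ give $D_j(\alpha_d)\le0$, so $P_j(\alpha_p)+D_j(\alpha_d)<0$ strictly and $\lfloor P_j(\alpha_p)+D_j(\alpha_d)\rfloor\le-1$ (and likewise $\lfloor P_1(\alpha_p)\rfloor\le-1$). By \eqref{eq:pdcontrol} and Algorithm~\ref{alg:pd_controller}, on every cycle either $m$ drops by at least $1$ or the reset rule replaces $m$ by $m_{\text{init}}+c\,m_{\text{step}}$ with $c$ incremented.

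Next I would organize the cycles into phases delimited by resets and prove termination. A phase that starts with restart parameter $v$ produces strictly decreasing integer parameters that stay $\ge m_{\min}$, hence lasts at most $v-m_{\min}+1$ cycles before the next reset; the successive phase-start values are $m_{\text{init}},\,m_{\text{init}}+m_{\text{step}},\,m_{\text{init}}+2m_{\text{step}},\dots$, which grow without bound because $m_{\text{step}}\ge1$. Therefore there is a first phase whose starting value is $\ge n$, and its first cycle calls GMRES with restart parameter $\ge n$, which terminates with $x=M^{-1}b$ since GMRES is exact after $n$ steps. Hence PD-GMRES reaches the exact solution after at most $\sum_k\bigl(m_{\text{init}}+(k-1)m_{\text{step}}-m_{\min}+1\bigr)+1$ cycles, the sum taken over the phases that precede the terminating one.

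Finally I would bound that quantity by $n(n+1)/2$. Using $m_{\min}\ge1$ it is at most $\sum_k\bigl(m_{\text{init}}+(k-1)m_{\text{step}}\bigr)+1$, and a short check of monotonicity in the integer parameters $m_{\text{init}},m_{\min},m_{\text{step}}\ge1$ shows the expression is largest when $m_{\text{step}}=1$ and $m_{\text{init}}=m_{\min}=1$, in which case the terminating phase is the $n$-th and the bound becomes $\sum_{k=1}^{n-1}k+1=n(n-1)/2+1\le n(n+1)/2$. I expect this last, purely combinatorial step to be the main obstacle: the phase-length estimate must be made uniform in the parameters, and the degenerate configurations must be disposed of (for instance $m_{\text{init}}<m_{\min}$, which merely forces extra early resets and so a \emph{smaller} $N$, or GMRES converging inside an earlier phase, which also only decreases $N$). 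Everything else is a direct consequence of the residual monotonicity of GMRES.
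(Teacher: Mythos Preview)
Your argument is essentially the same as the paper's: show $P_j+D_j<0$ so $m$ strictly decreases between resets, note the reset values eventually reach $n$, and count cycles. The paper's proof is a three-sentence sketch that asserts the $n(n+1)/2$ bound without the phase-by-phase accounting; your version supplies exactly that combinatorial detail (and correctly identifies the extremal parameter configuration $m_{\text{init}}=m_{\min}=m_{\text{step}}=1$), so it is a more rigorous rendering of the same idea rather than a different approach.
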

\begin{proof}
	Given $\alpha_p < 0$ and $\alpha_d \geq 0$, the proportional term $P_j(\alpha_p)$ in the PD-controller is always negative and the derivative term $D_j(\alpha_d)$ non-positive. As the controller rounds down, it follows that $m_{j+1} \leq m_j - 1$ at each restart. After at most $m_j - m_{\min}$ restarts, the reset condition is reached which increases the restart parameter. This process repeats for at most $n(n+1)/2$ restarts, until the restart parameter reaches the dimension $n$ of the matrix $M$, at which point GMRES finds the exact solution $x$.
\end{proof}

\begin{remark}
While Proposition~\ref{prop:1} does provide a guarantee that PD-GMRES will always converge if $\alpha_p < 0$ and $\alpha_d \geq 0$, this is usually not practically relevant as it requires $\mathcal{O}(n^2)$ restarts to actually reach the dimension $n$ of the matrix $M$. However, as it is also usually not necessary for the restart parameter to reach matrix dimension in order to have a good rate of convergence, the guaranteed increase of the restart parameter is still a useful result. Moreover, it provides theoretical justification for choosing a negative value for $\alpha_p$ and a positive value for $\alpha_d$.
\end{remark}

For $\alpha_p > 0, \alpha_d > 0$, the restart parameter increases directly in the case of stagnation, similarly guaranteeing convergence as the restart parameter will reach $n$. In this parameter range it is possible for the proportional and derivative terms in the control equation~\eqref{eq:pdcontrol} to sum up to $0$, resulting in $m_{j+1} = m_j$. In that case, the ratio $\frac{\|r_{j-2}\|}{\|r_j\|}$ can be directly determined from $\alpha_p$ and $\alpha_d$.

\begin{proposition}
    Consider a PD-GMRES iteration with $\alpha_p > 0$ and $\alpha_d > 0$ at step $j$. If the proportional \eqref{eq:proportional_term} and derivative \eqref{eq:derivative_term} terms in the controller add to zero, then $m_{j+1} = m_j$ and $\frac{\|r_{j-2}\|}{\|r_j\|} = 1 + \frac{2\alpha_p}{\alpha_d}$.
    \label{prop:2}
\end{proposition}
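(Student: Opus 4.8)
The plan is to simply unwind the zero-sum hypothesis algebraically; there is no real mathematical content beyond a rearrangement. First, since $\lfloor 0 \rfloor = 0$, the control equation~\eqref{eq:pdcontrol} immediately gives $m_{j+1} = m_j + \lfloor P_j(\alpha_p) + D_j(\alpha_d)\rfloor = m_j$, which settles the first claim (note $j \geq 2$ is implicit, so that the derivative term is actually in use). For the second claim I would substitute the definitions~\eqref{eq:proportional_term} and~\eqref{eq:derivative_term} into $P_j(\alpha_p) + D_j(\alpha_d) = 0$ to obtain
\[
\alpha_p\,\frac{\|r_j\|}{\|r_{j-1}\|} + \alpha_d\,\frac{\|r_j\| - \|r_{j-2}\|}{2\|r_{j-1}\|} = 0 .
\]

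Next I would clear denominators. Since the iteration has not terminated before step $j$, the residual norms $\|r_{j-1}\|$ and $\|r_j\|$ are strictly positive, so multiplying through by $2\|r_{j-1}\|/\|r_j\|$ is legitimate and yields $2\alpha_p + \alpha_d\bigl(1 - \|r_{j-2}\|/\|r_j\|\bigr) = 0$. Isolating the ratio and dividing by $\alpha_d$ (nonzero by hypothesis) then gives $\|r_{j-2}\|/\|r_j\| = (2\alpha_p + \alpha_d)/\alpha_d = 1 + 2\alpha_p/\alpha_d$, which is exactly the asserted identity.

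The only point that needs a word of justification is the positivity of the residual norms that appear as denominators, and this is immediate from the fact that PD-GMRES has not yet converged at step $j$. I do not anticipate any genuine obstacle: the sign conditions $\alpha_p, \alpha_d > 0$ are not even required for the derivation itself (only $\alpha_d \neq 0$ is used, to divide by $\alpha_d$), and the statement is essentially a one-line consequence of the stagnation-type identity $P_j(\alpha_p) + D_j(\alpha_d) = 0$.
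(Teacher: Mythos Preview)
Your argument is correct and matches the paper's own proof, which simply states that the claim follows directly from the control equation~\eqref{eq:pdcontrol}. You have merely spelled out the algebraic manipulation (clearing denominators and isolating the ratio) that the paper leaves implicit.
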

\begin{proof}
    The statement follows directly from the control equation~\eqref{eq:pdcontrol}.
\end{proof}

In situations where Proposition~\ref{prop:2} applies, the rate of convergence is particularly good if $\alpha_p$ is large or $\alpha_d$ is small. If $\alpha_p = \alpha_d$, the residual norm improves by a factor of $3$ every two restarts. If the restart parameter $m_{j+1}$ is decreased by the controller, then $|D_j(\alpha_d)| > |P_j(\alpha_p)|$
which results in a rate of convergence that is even better than $1 + \frac{2\alpha_p}{\alpha_d}$. This provides some theoretical justification for choosing a positive value for both $\alpha_p$ and $\alpha_d$.

\begin{remark}
It must be noted that the case that the restart parameter does not change is not directly addressed by Proposition~\ref{prop:2}, as it is possible for $m_{j+1} = m_j$ while $0 < P_j(\alpha_p) + D_j(\alpha_d) < 1$. In that case the floor function in the control equation~\eqref{eq:pdcontrol} rounds down, and the rate of convergence is slightly worse than described in Proposition~\ref{prop:2}. Because of this, the systematic error caused by the rounding in the controller becomes undesirable. To reduce this error, we adjust the control algorithm by storing the remainder of the rounding operation and re-adding it during the next iteration. As the rounding was actually used in Proposition~\ref{prop:1}, the modification of storing and re-adding the rounding error is only applied if $\alpha_p > 0$.
\end{remark}

Due to the theoretical convergence results found in Propositions \ref{prop:1} and \ref{prop:2} for $\alpha_d > 0$ with $\alpha_p < 0$ and $\alpha_p > 0$ respectively, these are the two parameter ranges which we evaluate in our numerical experiments in Section~\ref{sec:numer}.

\section{Geometric optimization approach} \label{sec:optim}
For the purpose of determining the PD-GMRES parameters $m_{\text{init}}$, $m_{\min}$, $m_{\text{step}}$, $\alpha_p$, $\alpha_d$ that minimize the PD-GMRES runtime, we describe a general quadtree-based algorithm for finding minima in a bounded two-dimensional domain which we then adapt to optimize the PD-GMRES parameters in pairs.

\subsection{The quadtree method} \label{sec:quadtree}
Given a rectangular domain $D \subset \setR^2$ and a function $f : D \rightarrow \setR$, our quadtree approach can be considered a reverse analogue of using quadtrees for image compression. While in image compression a quadtree approach is used to reduce the amount of data stored while still retaining enough important information to approximate the full image, the goal of our approach is to understand the behavior of $f$ and find minima in the domain $D$ while calculating as few actual evaluations of $f$ as possible. A simple method for this would be to evaluate $f$ on a uniform grid. Such a method is not well suited for finding local minima and likely involves many unnecessary evaluations of $f$ in less useful locations. Another option would be to take a steepest descent method, starting from some quantity of distributed points. This option generates local minima, but might not provide a good overview of the behavior of $f$ in the domain. The quadtree approach can achieve both goals by evaluating the function on its domain with varying resolution.

The basic idea of the algorithm is to construct a matrix $Q$,
the entries of which represent the value of $f$ at points in the domain $D$ (formally, we map $Q$ to the domain $D$ by an affine function $\phi$ in the sense that each entry of $Q$ is mapped to some point in $D$). We then break the domain down into four quadrants and evaluate $f$ at a single point per quadrant, with the evaluations being set as the values for corresponding blocks in $Q$. Some of the quadrants are then subdivided further with a single evaluation being calculated per new subquadrant. At each step of this iteration only a portion of the smallest quadrants is subdivided further by some \textbf{criterion C}, which results in $Q$ serving as a visualization of the behavior of $f$ on $D$, with high resolution in certain areas and lower resolution in others.
\begin{algorithm}
\caption{Quadtree based optimization}
\label{alg:quadtree}
\begin{algorithmic}
\REQUIRE{$f : D \rightarrow \setR,\ d \in \setN,\ \textbf{criterion C},\ \phi : Q \rightarrow D$}
\ENSURE{$Q \in \setR^{2^d \times 2^d}$}
\STATE{Evaluate $f$ for a single value pair $(u^\ast,v^\ast) \in D$}
\STATE{Initialize $Q \in \setR^{2^d \times 2^d}$ with the value $f(u^\ast,v^\ast)$ for all entries}
\STATE{Initialize a set $B$ containing the single $(2^d \times 2^d)$-block of $Q$}
\FOR{$i = 1$ \TO $d$}
\STATE{Choose a subset $B' \subseteq B$ by \textbf{criterion C} and set $B = \emptyset$}
\FORALL{blocks $\beta' \in B'$}
\STATE{Subdivide $\beta'$ into four blocks of size $(2^{d-i} \times 2^{d-i})$ and add them to $B$}
\ENDFOR
\FORALL{blocks $\beta \in B$}
\STATE{Evaluate $f$ for one value pair $(u_\beta, v_\beta) \in \phi(\beta)$}
\STATE{Assign the value $f(u_\beta, v_\beta)$ to all entries of block $\beta$ in $Q$}
\ENDFOR
\ENDFOR
\end{algorithmic}
\end{algorithm}

The general principle of the quadtree approach is shown in Algorithm~\ref{alg:quadtree}. Here, $d \in \setN$ represents the maximal depth to which the parameter space is evaluated. The resulting matrix $Q$ consists of homogeneous quadratic blocks of sizes $2^k$ with $0 \leq k \leq d$. A visualization of an example quadtree structure is shown in Figure~\ref{fig:qt_vis}.

\newpage 
\begin{wrapfigure}{r}{0.4\textwidth}
\includegraphics[width=0.38\textwidth]{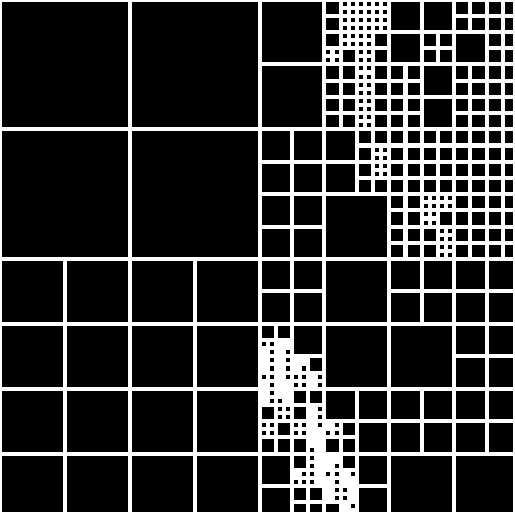}
\caption{Quadtree visualization.}
\label{fig:qt_vis}
\end{wrapfigure}

In Algorithm~\ref{alg:quadtree} it is not specified by which \textbf{criterion C} the blocks to be subdivided further are chosen. Here, several different options are possible. As minimizing $f$ is a key goal, it is natural to select the blocks where the evaluation of $f$ is minimal. Then, the simplest option is to impose a fixed ratio of blocks to be subdivided for each level which allows explicit control of the number of function evaluations that are performed when creating the quadtree.
The total number of function evaluations which are performed is
\begin{equation*}
    1 + \sum_{i=1}^d \prod_{j=1}^i 4 \cdot s_j,
\end{equation*}
given fixed block ratios $s_j$ for $1 \leq j \leq d$. This option, which we use in this work, corresponds to a constrained breadth-first search. Similarly a depth-first search is also possible. Alternative methods for choosing which blocks to divide are of course possible, such as determining $s$ dynamically during the iteration which could in principle help avoid unnecessary evaluations. It is also possible to base the subdivisions not on minimizing $f$ but on other properties. One such option would be to consider the differences in the values of neighboring blocks and subdividing those where this difference is large.

\subsection{Application to PD-GMRES}
In this section we apply the quadtree approach from Section~\ref{sec:quadtree} to minimize the runtime of PD-GMRES by optimizing the algorithm's free parameters $m_{\text{init}}$, $m_{\min}$, $m_{\text{step}}$, $\alpha_p$, $\alpha_d$ in pairs.

\subsubsection{A heuristic runtime estimate}
\revision{
As runtime measurements are volatile, in particular relative to the differences one can expect for slight parameter changes in PD-GMRES, we instead use a heuristic function which can deterministically calculate a runtime estimate. A PD-GMRES iteration consists of individual iterations of GMRES($m_k$) with restart parameters $(m_k)$. As a basis for our estimate we use $\sum_k h(m_k)$, where the function $h : \setN \rightarrow \setR_+$ gives a runtime estimate $h(m_k)$ for one iteration of GMRES($m_k$). We choose $h$ as a quadratic polynomial which is justified by the fact that the runtime complexity of performing $m$ steps of GMRES is $\mathcal{O}(m^2)$ \cite{Joubert}. To determine $h$ we perform runtime measurements of single iterations of GMRES($m$) for multiple restart parameters $m$. We then calculate the coefficients of $h$ by least-squares fitting a quadratic polynomial to the measurement data.
As our heuristic runtime estimate for a PD-GMRES iteration we use
\begin{equation}
\label{eq:heuristic_function}
	\bar{h}(\tilde{r}) = \sum_{\{j \in \setN\ \!|\ \mathrm{Tol}_{\max} > \|\tilde{r}_j\| > \mathrm{Tol}_{\min}\}} \left(\sum_{k} h\big((\tilde{m}_j)_k\big)\right) \frac{\log\left(\frac{\|\tilde{r}_{j-1}\|}{\|\tilde{r}_j\|}\right)}{\log\left(\frac{\|\tilde{r}_{j_{\max}}\|}{\|\tilde{r}_{j_{\min}}\|}\right)},
\end{equation}
where $\tilde{r} = (\tilde{r}_j)$ is the sequence of the residual vectors at each inner iteration of PD-GMRES (not just at each restart), $\tilde{r}_{j_{\min}}$ and $\tilde{r}_{j_{\max}}$ are the residuals with the smallest and largest norm within $[\mathrm{Tol}_{\min}, \mathrm{Tol}_{\max}]$ respectively and $((\tilde{m}_j)_k)$ are the restart parameters for the specific PD-GMRES iteration up to the inner index $j$, yielding the residual $\tilde{r}_j$ (which means that $\sum_{k} ((\tilde{m}_j)_k) = j$). The $((\tilde{m}_j)_k)$ are determined from the restart parameters $(m_k)$ of the complete PD-GMRES iteration. The function $\bar{h}$ from \eqref{eq:heuristic_function} averages the individual heuristics $\sum_{k} h\big((\tilde{m}_j)_k\big)$ for all residuals $\tilde{r}_j$ between $\tilde{r}_{j_{\max}}$ and $\tilde{r}_{j_{\min}}$. Specifically, we have
\begin{equation*}
    \sum_{\{j \in \setN\ \!|\ \mathrm{Tol}_{\max} > \|\tilde{r}_j\| > \mathrm{Tol}_{\min}\}} \frac{\log\left(\frac{\|\tilde{r}_{j-1}\|}{\|\tilde{r}_j\|}\right)}{\log\left(\frac{\|\tilde{r}_{j_{\max}}\|}{\|\tilde{r}_{j_{\min}}\|}\right)} = 1,
\end{equation*}
from which it follows that the heuristic runtime estimates for all residual norms between $\mathrm{Tol}_{\max}$ and $\mathrm{Tol}_{\min}$ are weighted in proportion to the improvement of the residual norm during the $j$-th step of PD-GMRES.
This ensures that instead of optimizing for one specific tolerance we achieve good performance on average over a range of practical values.

We choose a tolerance range from $\mathrm{Tol}_{\max} = 10^{-3}$ to $\mathrm{Tol}_{\min} = 10^{-9}$ as many numerical applications set their target tolerances between these values.

In subsequent analysis we consider multiple different matrices and therefore determine a different heuristic $h$ for each matrix individually, as the runtime costs naturally differ.

For the remainder of this paper when we refer to runtime, we use the averaged heuristic function $\bar{h}$ instead of actual runtime measurements, for PD-GMRES as well as GMRES($m$).
}

\subsubsection{Parameter space considerations} \label{subsec:parameters}
There are five parameters which can be optimized to improve PD-GMRES performance. The initial restart parameter $m_{\text{init}}$, the minimal restart parameter $m_{\min}$, the increment value $m_{\text{step}}$ by which the restart parameter is increased if the value calculated by the controller is less than $m_{\min}$, and the two parameters $\alpha_p$ and $\alpha_d$ which respectively determine the relative impact of the proportional term $P_j(\alpha_p)$ and the derivative term $D_j(\alpha_d)$ in the PD-controller \eqref{eq:pdcontrol}. As to our knowledge there exists no a-priori indication as to what these parameters should be set to.

As the quadtree method allows for optimization of only two parameters simultaneously, we must decide which parameters are grouped together. Due to the structure of the PD-controller it seems natural to optimize $\alpha_p$ and $\alpha_d$ together, as they both directly appear in the control Algorithm~\ref{alg:pd_controller}. Similarly, $m_{\min}$ and $m_{\text{step}}$ are directly related, as they only occur in the reset condition. This leaves $m_{\text{init}}$ as the final parameter and the one which does not occur within the control procedure. Given these considerations and our observation that $\alpha_p$ and $\alpha_d$ appear to be the most impactful parameters for performance, the following optimization strategy is chosen:

\begin{enumerate}
    \item Make an initial choice for $m_{\min}$ and $m_{\text{step}}$.
    \item Choose a value for $m_{\text{init}}$.
    \item Analyze the $(\alpha_p, \alpha_d)$-parameter space with the quadtree method to find improved parameters for the given values of $m_{\text{init}}, m_{\min}, m_{\text{step}}$.
    \item Using these optimized $\alpha_p$ and $\alpha_d$, analyze the $(m_{\min}, m_{\text{step}})$-parameter space with the quadtree method to find improved parameters.
    \item Repeat steps 3 and 4 for a fixed number of cycles.
    \item Repeat from step 2 for different $m_{\text{init}}$ from a preselected set.
    \item Compare the optimal $m_{\min}, m_{\text{step}}, \alpha_p, \alpha_d$ for the different $m_{\text{init}}$ to determine a full set of optimized parameters.
\end{enumerate}

\revision{Each repetition of steps 3 and 4 likely improves the set of optimized parameters. With each optimization cycle one can expect diminishing returns for the performance gain of the parameters, which makes this optimization procedure a practical approach towards optimized, albeit not optimal, parameters. The limited resolution of the quadtree construction prevents a guarantee that a global minimum in the parameter space is found and therefore the optimization procedure can not be expected to converge to a set of globally optimal parameters. However, as observed in the numerical experiments (cf.~Figure~\ref{fig:optimization_sequence}), in practice it is sufficient to terminate the procedure after a few optimization cycles.}



As mentioned before and in \cite{CuevasNunez2018} there is no 
a-priori indication for the sign of $\alpha_p$ and $\alpha_d$. Because theoretical justification exists both for choosing $\alpha_p < 0$ and $\alpha_d > 0$ by Proposition~\ref{prop:1} as well as for $\alpha_p > 0$ and $\alpha_d > 0$ by Proposition~\ref{prop:2}, these two quadrants of the $(\alpha_p, \alpha_d)$-parameter space are evaluated separately in the numerical experiments (cf.~Section~\ref{sec:numer}).

\subsubsection{Matrix averaging and quadtree resolution} \label{subsec:matrix_averaging}
Applying the optimization procedure for a single matrix gives optimal parameters for PD-GMRES for that specific matrix. It is also possible to apply the method to a set of matrices to find parameters for which PD-GMRES performs well on average for all matrices in the set or even for additional, similar matrices.

After evaluating the quadtrees for all matrices the results are normalized individually relative to the minimal value found in the quadtree for each matrix respectively. The geometric mean over all matrices is then calculated and the minimum in this mean quadtree is taken as the (on average) optimal parameter set. Taking the geometric mean guarantees that the normalized values are optimal on average and further ensures that all matrices are weighted equally. However, it is also possible to take a biased averaging function if the performance for some matrices in the set is considered more important than the one for others.

There is no a-priori method by which it can be determined how high the resolution of the quadtree needs to be in order to provide meaningful data. Additionally, as each evaluation requires calculating a full PD-GMRES iteration, it is desirable to minimize the amount of evaluations to allow the full optimization procedure to be performed in a reasonable amount of time. To estimate the runtime of a single PD-GMRES iteration in advance, we evaluate PD-GMRES runtimes for a set of points within the parameter space and average them (per matrix). In our optimization procedure the depths and ratios for the individual quadtrees are then set using this runtime estimate, such that the total time required to set up the quadtree for each matrix is constrained to a practical amount. Therefore, matrices for which PD-GMRES has higher runtime are optimized using lower resolution quadtrees, which does carry an increased risk of discarding potentially better performing parameters. An alternative would be to create quadtrees of the same resolution for all matrices, at the cost of an increased runtime for the optimization procedure.

As an additional modification, in order to avoid some resolution problems an additional evaluation is performed for the previously best parameters starting at the first optimization in the $(m_{\min}, m_{\text{step}})$-parameter space. This prevents a situation where the new parameters perform worse than the previous ones. It must be noted that this does not guarantee that the average performance actually increases with each optimization cycle, as the low resolution for some of the less well-performing matrices does produce erroneous data points in the quadtree.

\section{Numerical experiments for optimized PD-GMRES} \label{sec:numer}
Before proceeding with our main numerical experiments, we first showcase the behavior of the PD-GMRES iteration and of the quadtree evaluations using two sample matrices.

We use $b = (1, \dots , 1)^T$ as right hand side for all subsequent evaluations.

\subsection{Examples of PD-GMRES and quadtree behavior}
As sample matrices we choose \textit{steam2} and \textit{pde2961} (cf.~Table~\ref{tab:training_matrices}) that are introduced in Section~\ref{sec:training data}.

\begin{figure}
\begin{subfigure}[t]{0.24\textwidth}
\centering
\caption*{\textit{steam2}}
\includegraphics[width=\textwidth]{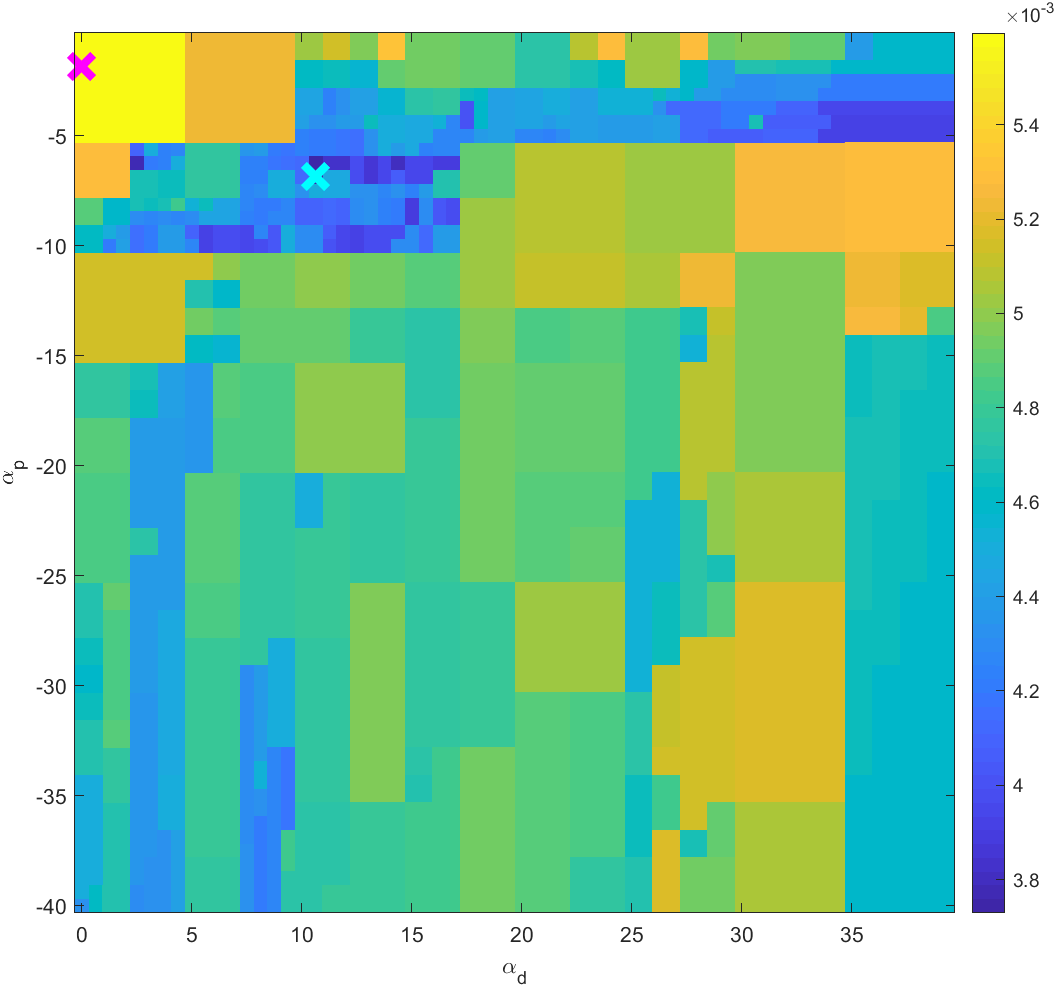}
\end{subfigure}
\begin{subfigure}[t]{0.24\textwidth}
\centering
\caption*{\textit{pde2961}}
\includegraphics[width=\textwidth]{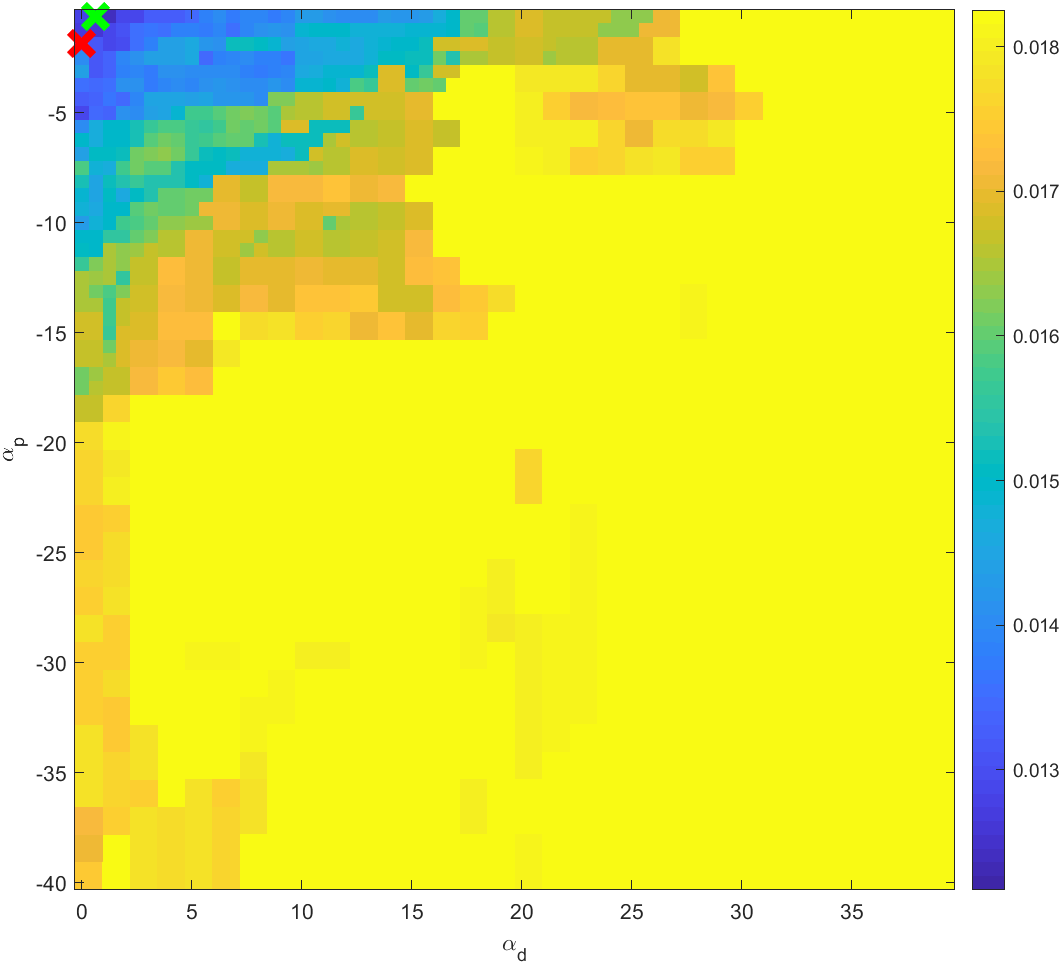}
\end{subfigure}
\begin{subfigure}[t]{0.24\textwidth}
\centering
\caption*{Restart parameter}
\includegraphics[width=\textwidth]{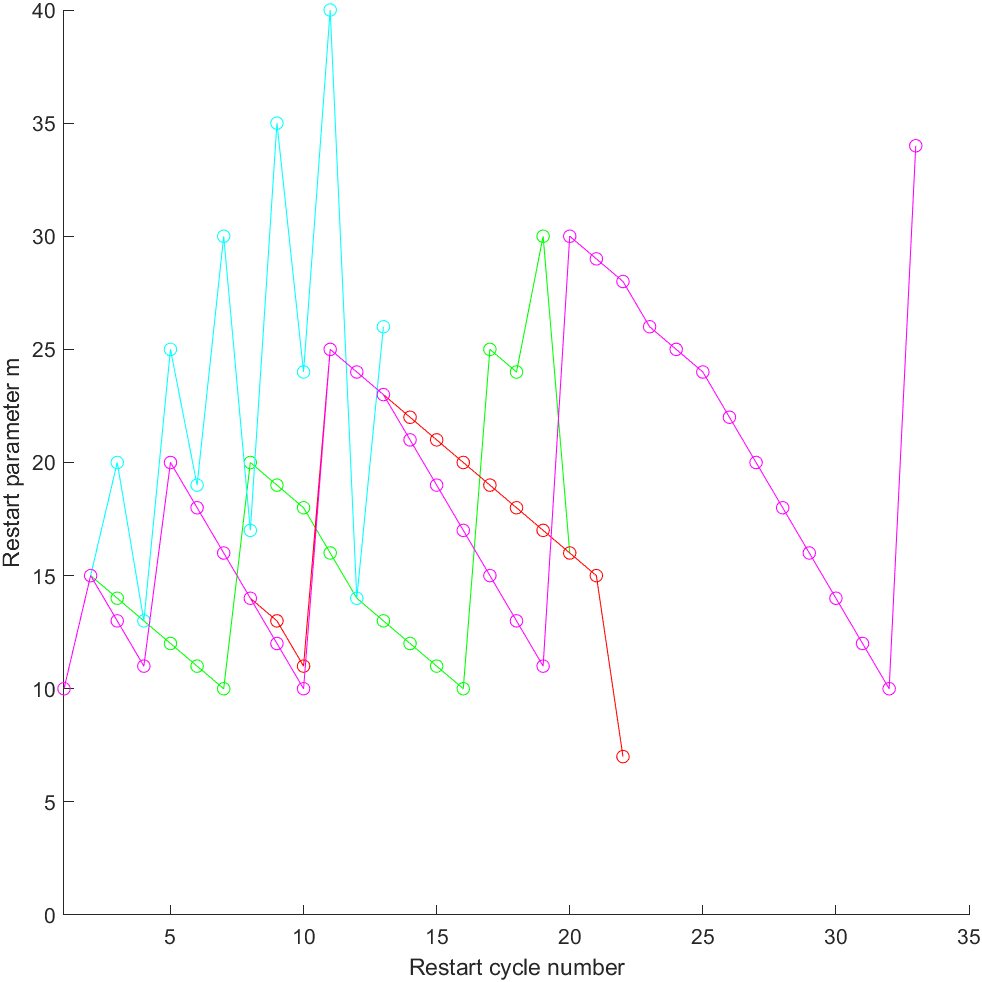}
\end{subfigure}
\begin{subfigure}[t]{0.24\textwidth}
\centering
\caption*{Residual norm}
\includegraphics[width=\textwidth]{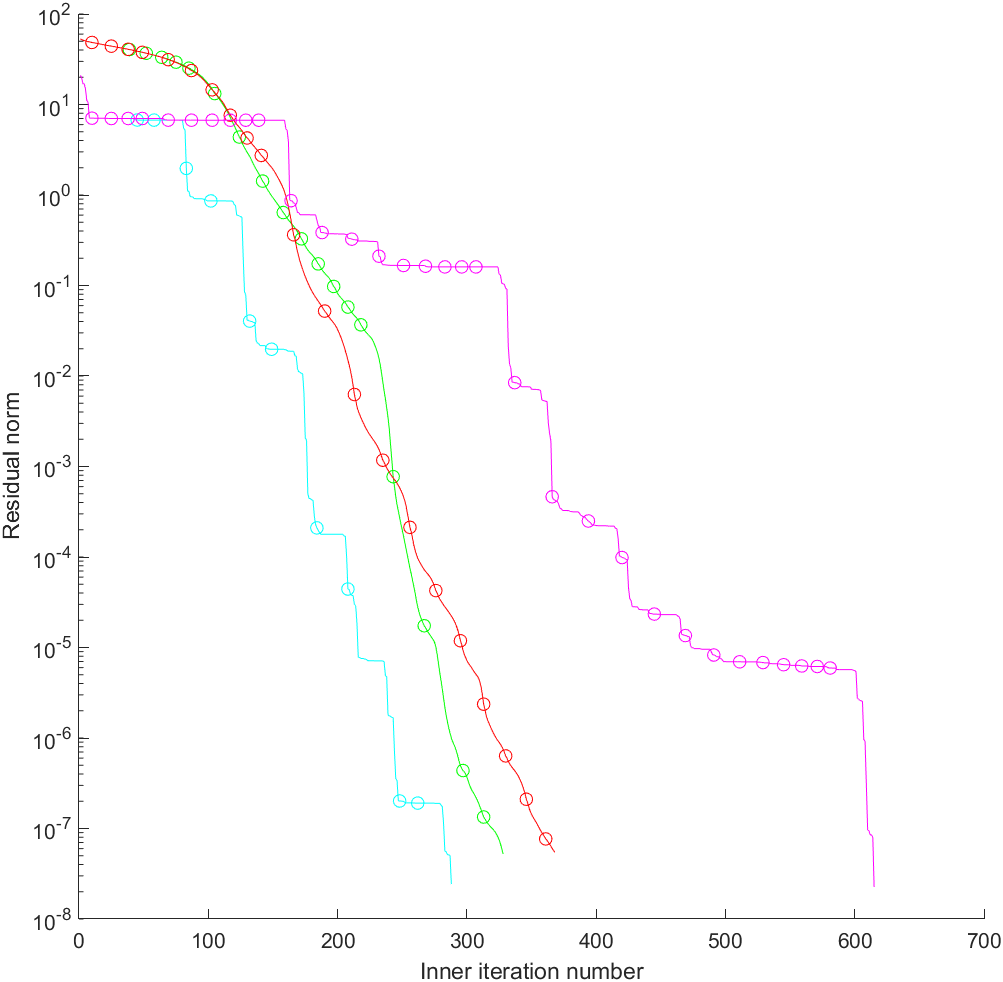}
\end{subfigure}
\centering
\begin{subfigure}[t]{0.24\textwidth}
\centering
\includegraphics[width=\textwidth]{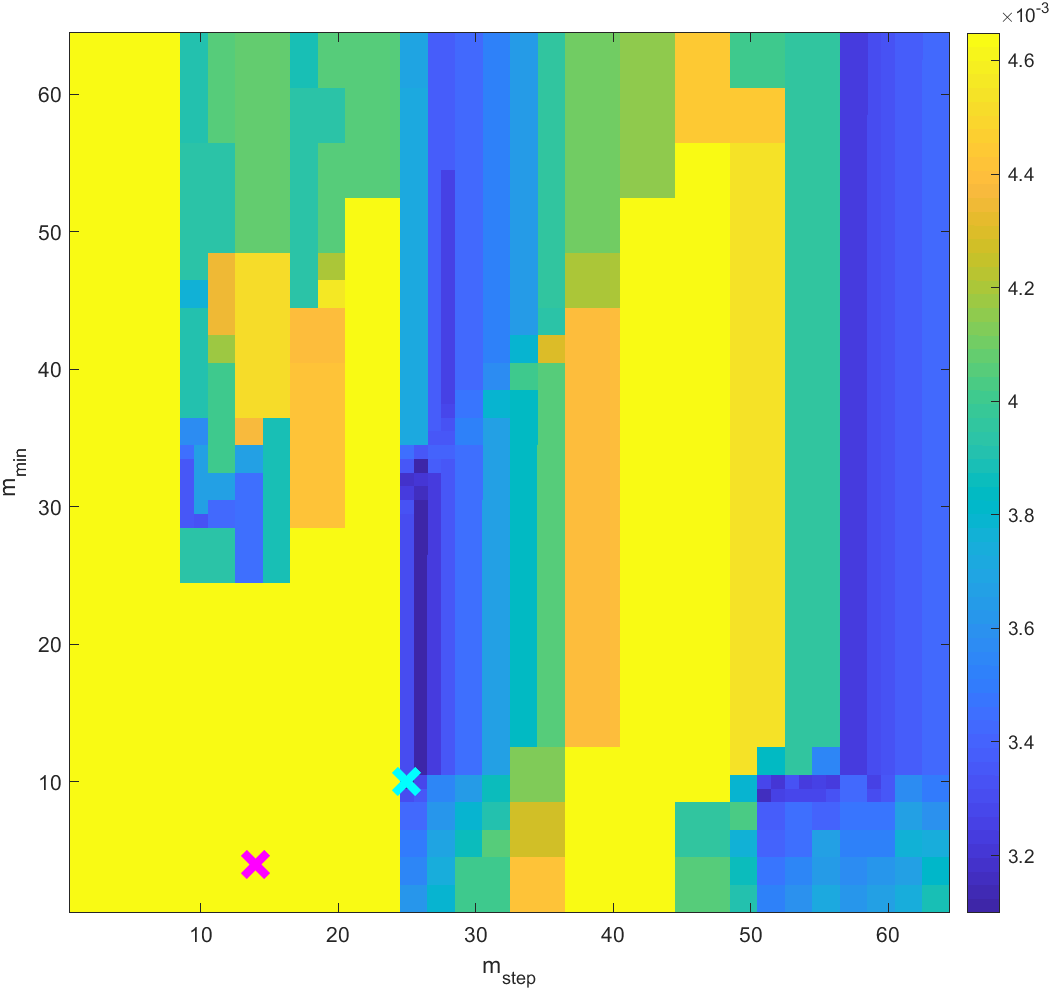}
\end{subfigure}
\begin{subfigure}[t]{0.24\textwidth}
\centering
\includegraphics[width=\textwidth]{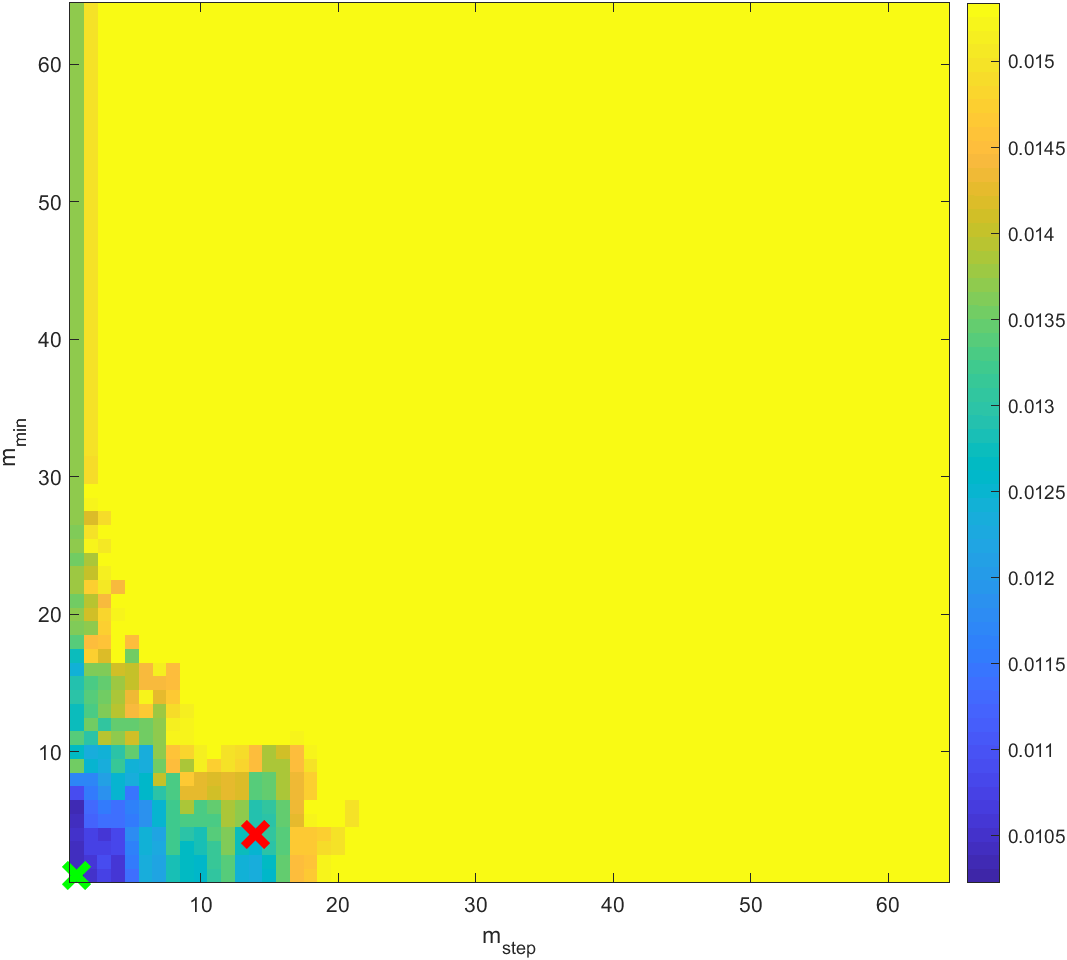}
\end{subfigure}
\begin{subfigure}[t]{0.24\textwidth}
\centering
\includegraphics[width=\textwidth]{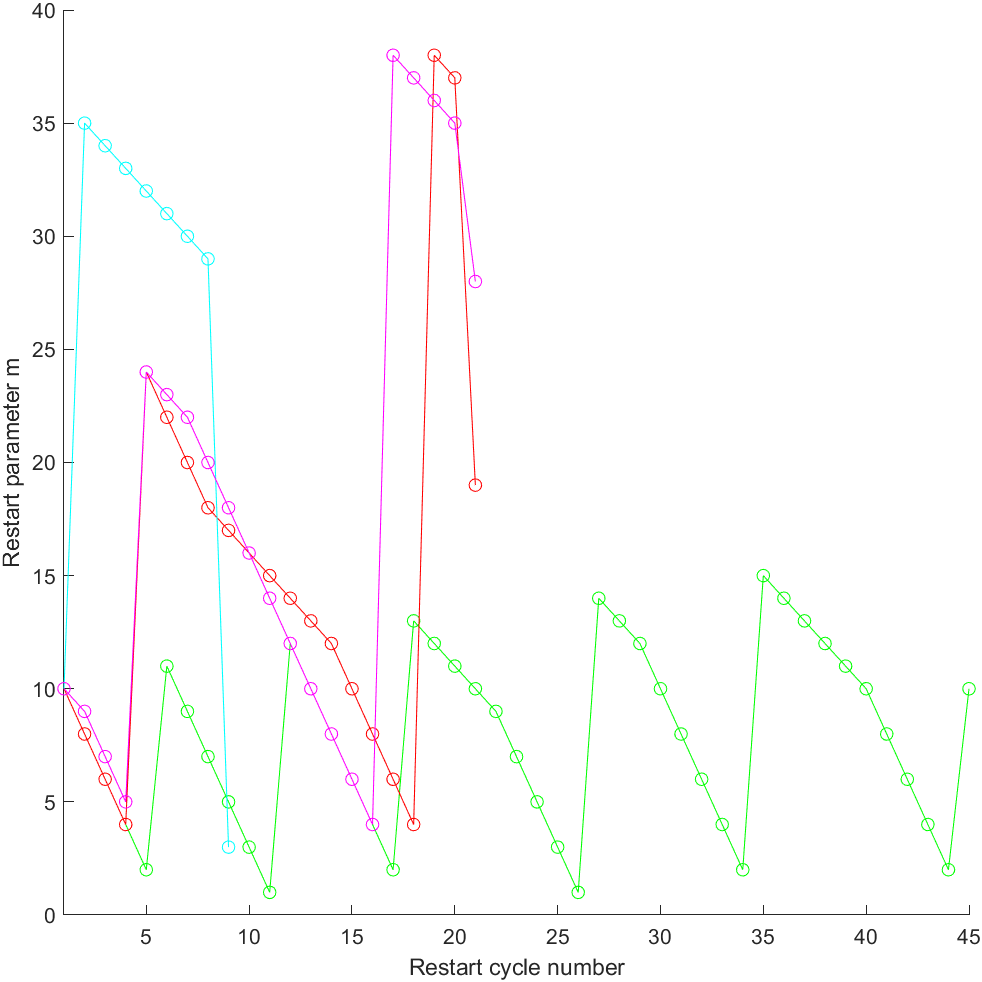}
\end{subfigure}
\begin{subfigure}[t]{0.24\textwidth}
\centering
\includegraphics[width=\textwidth]{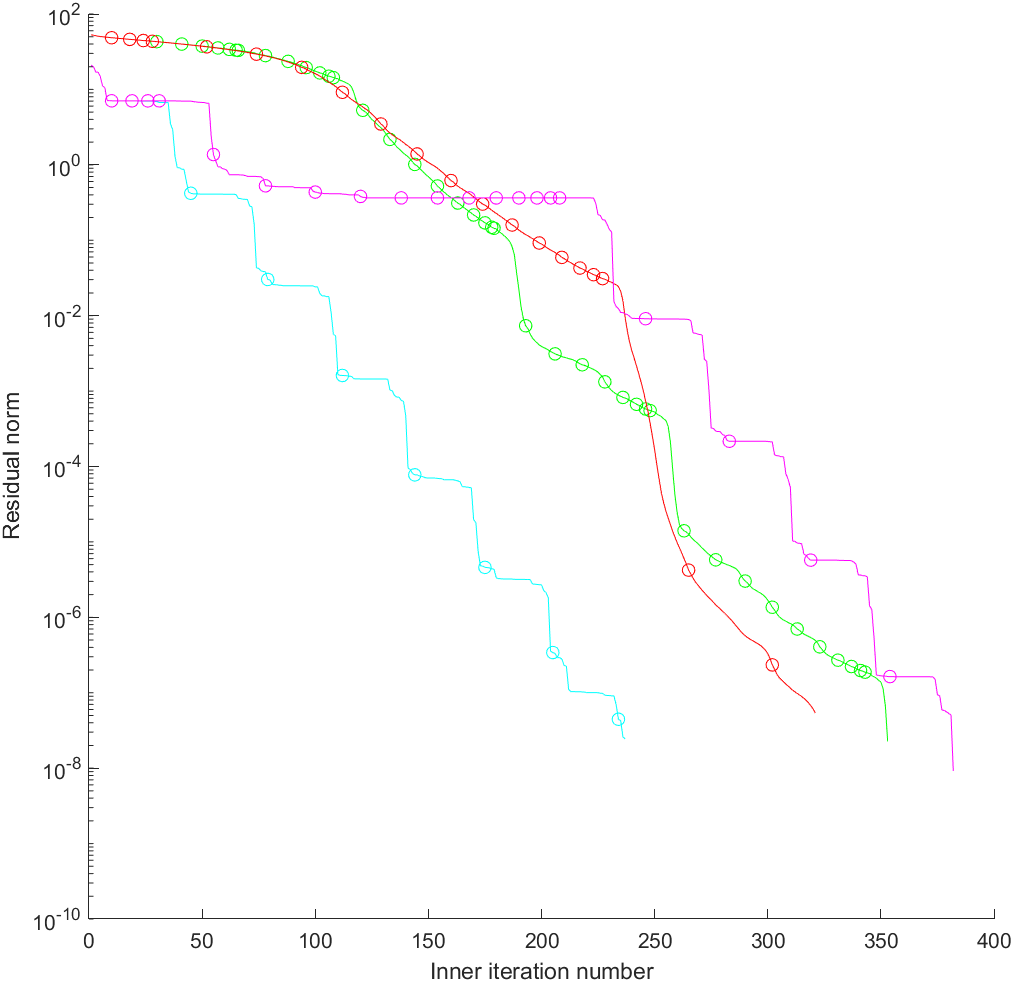}
\end{subfigure}
\centering
\caption{Example quadtrees and PD-GMRES iteration behavior for the $(\alpha_p$, $\alpha_d)$-space (top) and the $(m_{\min}, m_{\text{step}})$-space (bottom). \revision{From each quadtree we take two samples for the matrices \textit{steam2} and \textit{pde2961}, taking the local minimum for each (cyan for \textit{steam2} and green for \textit{pde2961}) and the average minimum found accross all matrices (magenta for \textit{steam2} and red for \textit{pde2961}). For these samples the behavior of the restart parameter and residual norm during the respective PD-GMRES iteration are shown.}}
\label{fig:examples}
\end{figure}

In Figure~\ref{fig:examples} the upper row shows the behavior in the $(\alpha_p$, $\alpha_d)$-space, while the lower row shows the $(m_{\min}, m_{\text{step}})$-space. For each, the quadtrees show PD-GMRES performance in parameter space for \textit{steam2} and \textit{pde2961} respectively, while the figures on the right show the behavior of the restart parameter $m$ and the residual norm throughout the PD-GMRES iteration. In the plot of the residual norms, the inner iteration numbers at which restarts occur are marked with circles.

The two matrices were chosen as they represent quite different behavior, which can be taken as an indicator that no averaging optimizer can ever be expected to provide parameters for which all matrices perform well. For each matrix and parameter space, we take two parameter pairs for the evaluations shown on the right. The cyan and green graphs correspond to the local minima in the parameter spaces for the two matrices respectively, while the magenta and red graphs correspond to the parameter pairs which are found to be best when taking the average over a large set of matrices (cf. Section~\ref{sec:training data}).

The quadtrees show the expected behavior of resolving the parameter space at different resolution, with better-performing areas being resolved in greater detail. In the $(\alpha_p$, $\alpha_d)$-space we find that \textit{pde2961} has its best-performing values concentrated near $(0, 0)$, for \textit{steam2} we find well-performing parameters further away from the origin. A similar difference can be found in the $(m_{\min}, m_{\text{step}})$-space, where \textit{pde2961} also shows good performance near $(0, 0)$ while the behavior for \textit{steam2} is very different. We see clear vertical lines of similar performance, which shows that for \textit{steam2} the choice of $m_{\min}$ is less important than the choice of $m_{\text{step}}$, as multiple nearly equally well-performing values for $m_{\min}$ are found for a given $m_{\text{step}}$. It should also be noted that most well-performing values are found for $m_{\min} > 10 = m_{\text{init}}$, which would trigger the reset condition in the very first iteration.

The graphs for the behavior of the restart parameter show a zigzag-pattern which is found for all matrices, as $m$ is slowly decreased by the controller until $m < m_{\min}$ triggers the reset condition. If $\alpha_p$ is small and $\alpha_d$ is large, this occurs more often giving a tighter pattern and if $m_{\text{step}}$ is large then the jumps become larger. The graphs for the residual norm also show a typical pattern, depending on the restart parameter. Here, one can observe sections where the residual norm plateaus as the restart parameter becomes small and after the parameter jumps, the residual norm decreases steeply. It should be further noted that the length of the iteration does not necessarily correlate with its runtime and it is possible for longer iterations (both in terms of number of restart cycles and total inner iterations) to perform better than shorter ones. Specifically, this occurs when a longer iteration features smaller restart parameters, making each individual GMRES call cheaper and resulting in better runtime overall.

\subsection{Applying the optimization approach to training matrices} \label{sec:training data}
We choose a set of training matrices for which we determine optimal PD-GMRES parameters with our opimization procedure. The resulting parameters provide superior runtime compared to GMRES and to PD-GMRES using parameters selected in \cite{CuevasNunez2018}. For the set of training matrices, we initially choose the same set of $23$ matrices from \cite{MatrixDatabase} which was chosen in \cite{Cabral2020}. Since convergence was not achieved for any GMRES or PD-GMRES within a practical amount of time for three matrices (\textit{ex40}, \textit{sherman3}, \textit{wang4}), these three matrices are excluded from the set and the remaining $20$ matrices are taken as training set, see Table~\ref{tab:training_matrices}. Additional considerations for the excluded matrices can be found in Section~\ref{subsec:extend}.

For these training matrices the optimization procedure is performed as described in Section~\ref{sec:optim} with an initial parameter range of $\alpha_p \in [-40, 0]$, $\alpha_d \in [0, 40]$, $m_{\min} \in [1, 65]$, and $m_{\text{step}} \in [1, 65]$. For the first optimization cycle the (arbitrary) choice of $m_{\min} = 10$, $m_{\text{step}} = 5$ is made. A total of three optimization cycles is performed, with the parameter ranges being halved for each repeated evaluation. The choice is made not to increase the level of detail in the quadtrees between optimization cycles. For the $(m_{\min}, m_{\text{step}})$-space this would of course not be possible anyways, and for the $(\alpha_p$, $\alpha_d)$-space one can observe minor improvements by allowing smaller differences in value but those tend towards overfitting for the specific matrices. Furthermore, not increasing the level of detail results in smaller quadtrees, so fewer PD-GMRES evaluations having to be made, which reduces the runtime of the full optimization procedure.

The optimization procedure is performed for $m_{\text{init}} = 10, 20, 30$ and the optimal parameters are found to be $\alpha_p = -0.625, \alpha_d = 4.375, m_{\min} = 3, m_{\text{step}} = 10, m_{\text{init}} = 10$. The PD-GMRES using these parameters will from here on be referred to as the optimized PD-GMRES.

\begin{table}[t]
    \centering
    \begin{tabular}{|c|c|c|c|c|}
    \hline
        Name & n & Non-zeros & Condition & Application area \\ \hline
        add20 &  $2395$ & $13151$ & $1.76e+04$ & circuit simulation problem \\ \hline
		cavity05 &  $1182$ & $32632$ & $9.18e+05$ & computational fluid dynamics problem sequence \\ \hline
		cavity10 &  $2597$ & $76171$ & $4.46e+06$ & computational fluid dynamics problem sequence \\ \hline
		cdde1 &  $961$ & $4681$ & $4.08e+03$ & computational fluid dynamics problem sequence \\ \hline
		circuit\_2 &  $4510$ & $21199$ & $7.12e+06$ & circuit simulation problem \\ \hline
\rowcolor{lightgray}ex40 &  $7740$ & $456188$ & $1.20e+07$ & computational fluid dynamics problem \\ \hline
		fpga\_trans\_01 &  $1220$ & $7382$ & $2.88e+04$ & circuit simulation problem sequence \\ \hline
		memplus &  $17758$ & $99147$ & $2.67e+05$ & circuit simulation problem \\ \hline
		orsirr\_1 &  $1030$ & $6858$ & $1.67e+05$ & computational fluid dynamics problem \\ \hline
		orsreg\_1 &  $2205$ & $14133$ & $1.54e+04$ & computational fluid dynamics problem \\ \hline
		pde2961 &  $2961$ & $14585$ & $9.49e+02$ & 2D/3D problem \\ \hline
		raefsky1 &  $3242$ & $293409$ & $3.16e+04$ & computational fluid dynamics problem sequence \\ \hline
		raefsky2 &  $3242$ & $293551$ & $1.08e+04$ & subsequent computational fluid dynamics problem \\ \hline
		rdb2048 &  $2048$ & $12032$ & $1.96e+03$ & computational fluid dynamics problem \\ \hline
		sherman1 &  $1000$ & $3750$ & $2.26e+04$ & computational fluid dynamics problem \\ \hline
\rowcolor{lightgray}sherman3 &  $5005$ & $20033$ & $6.90e+16$ & computational fluid dynamics problem \\ \hline
		sherman4 &  $1104$ & $3786$ & $7.16e+03$ & computational fluid dynamics problem \\ \hline
		sherman5 &  $3312$ & $20793$ & $3.90e+05$ & computational fluid dynamics problem \\ \hline
		steam2 &  $600$ & $5660$ & $3.55e+06$ & computational fluid dynamics problem \\ \hline
		wang2 &  $2903$ & $19093$ & $3.18e+04$ & subsequent semiconductor device problem \\ \hline
\rowcolor{lightgray}wang4 &  $26068$ & $177196$ & $4.91e+04$ & semiconductor device problem \\ \hline
		watt\_1 &  $1856$ & $11360$ & $5.38e+09$ & computational fluid dynamics problem \\ \hline
		young3c &  $841$ & $3988$ & $1.15e+04$ & acoustics problem \\ \hline
    \end{tabular}
    \caption{Name, matrix dimension, number of non-zeros, condition number (1-norm condition estimate) and application area, for the set of training matrices (same as in \cite{Cabral2020}). We exclude matrices marked in gray from further evaluation due to poor performance.}
    \label{tab:training_matrices}
\end{table}

\begin{figure}
\begin{subfigure}{0.33\textwidth}
\centering
\caption*{First optimization cycle}
\includegraphics[width=\textwidth]{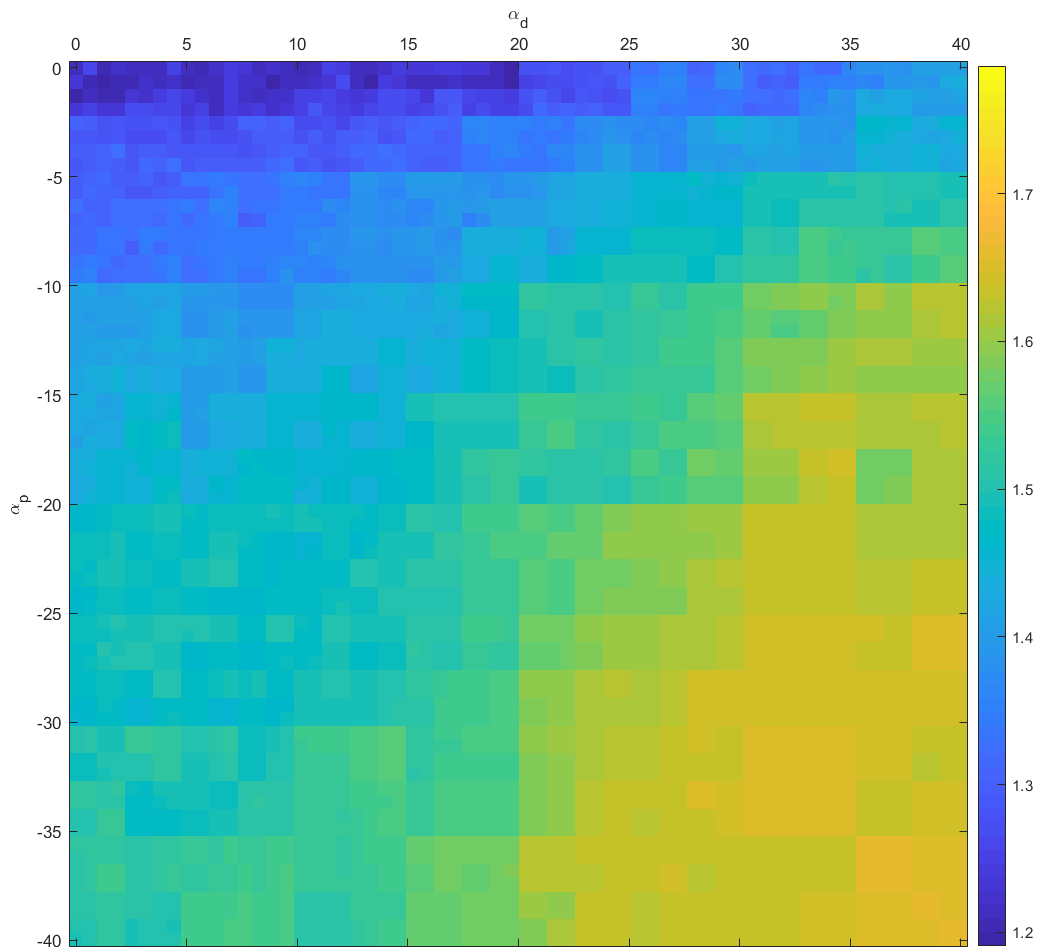}
\end{subfigure}
\begin{subfigure}{0.33\textwidth}
\centering
\caption*{Second optimization cycle}
\includegraphics[width=\textwidth]{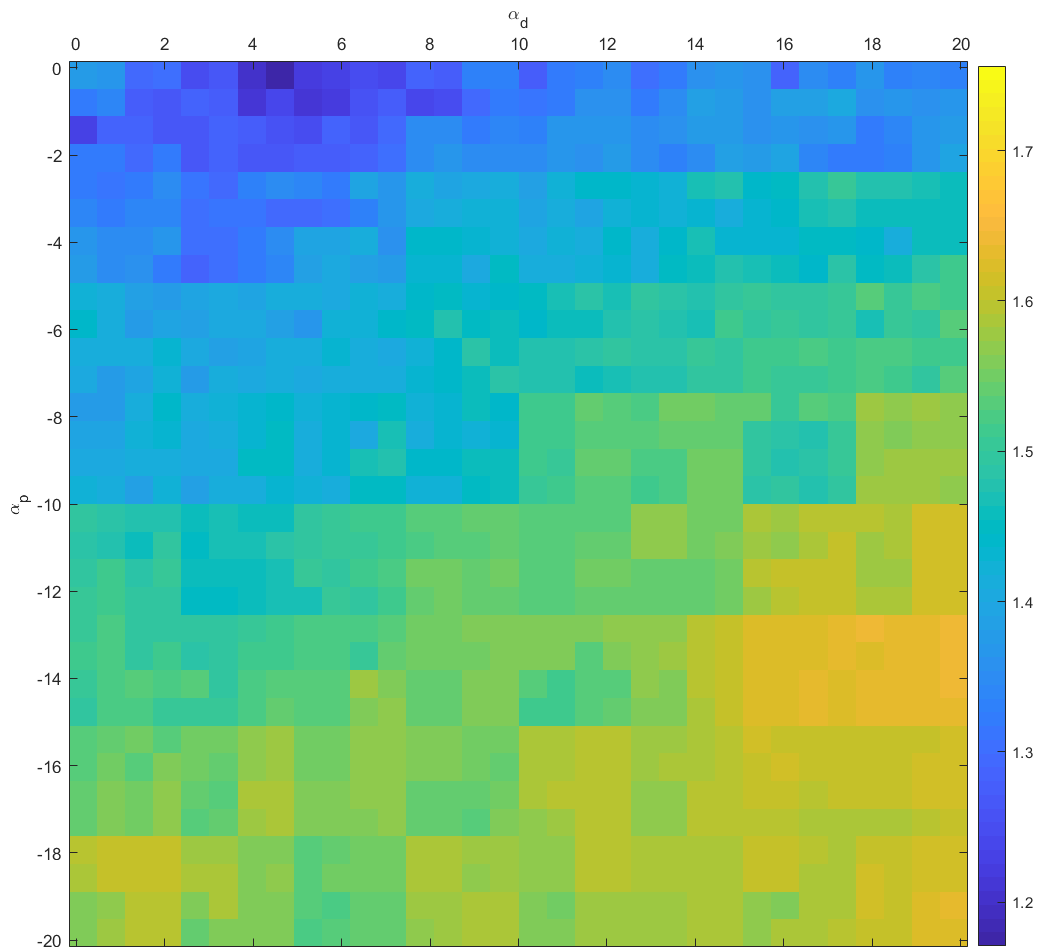}
\end{subfigure}
\begin{subfigure}{0.33\textwidth}
\centering
\caption*{Third optimization cycle}
\includegraphics[width=\textwidth]{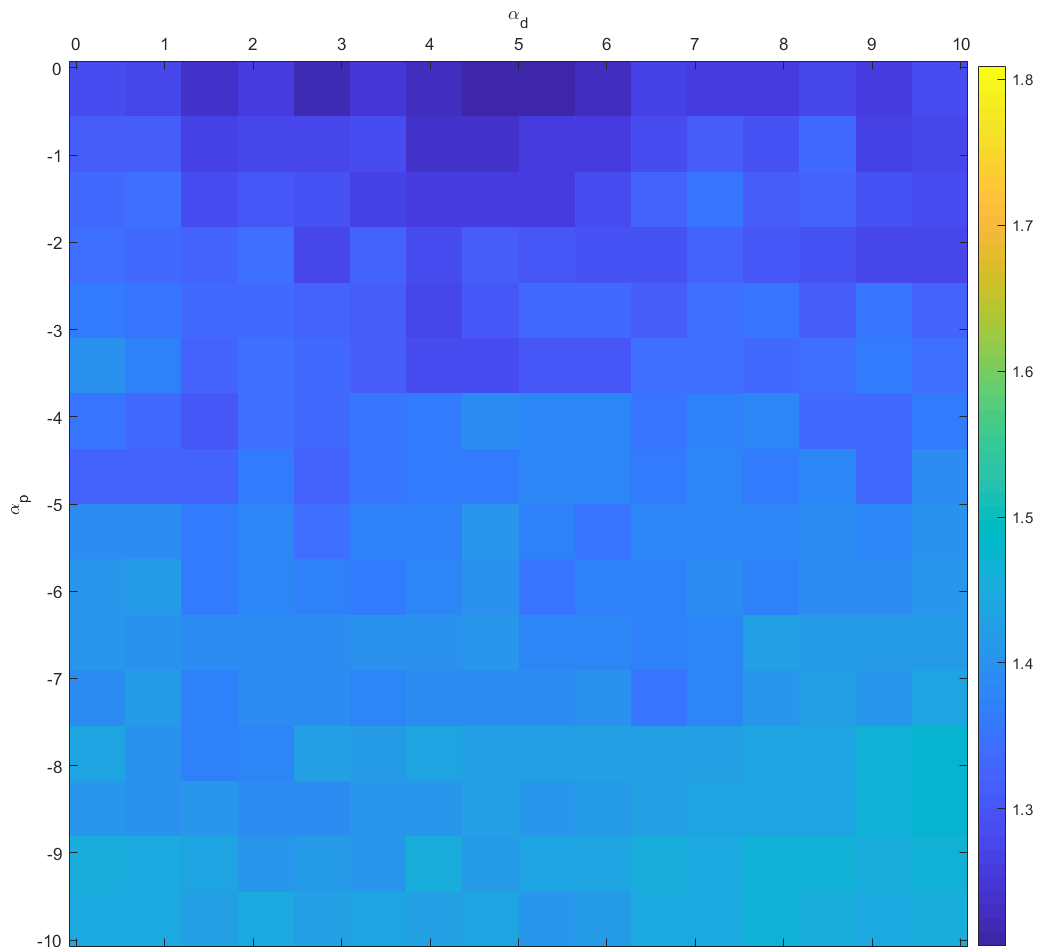}
\end{subfigure}
\begin{subfigure}{0.33\textwidth}
\vspace*{0.5cm}
\centering
\includegraphics[width=\textwidth]{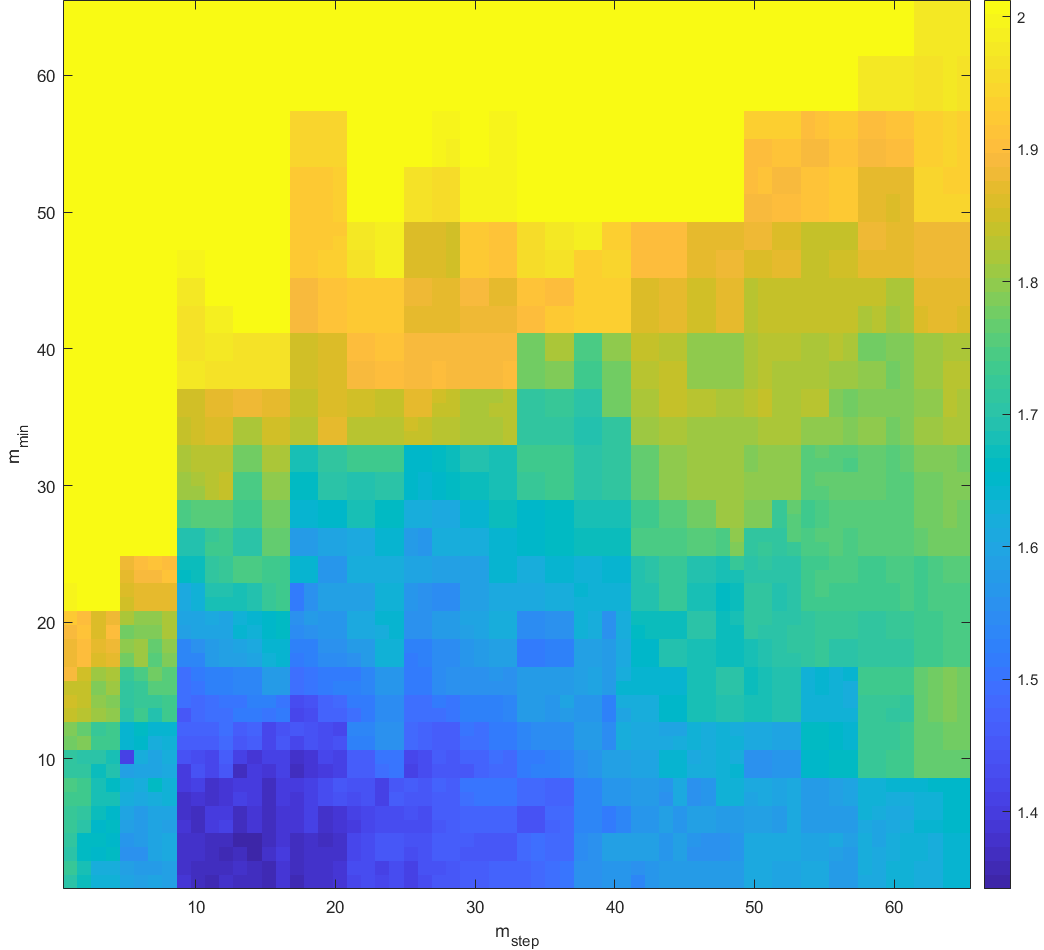}
\end{subfigure}
\begin{subfigure}{0.33\textwidth}
\centering
\includegraphics[width=\textwidth]{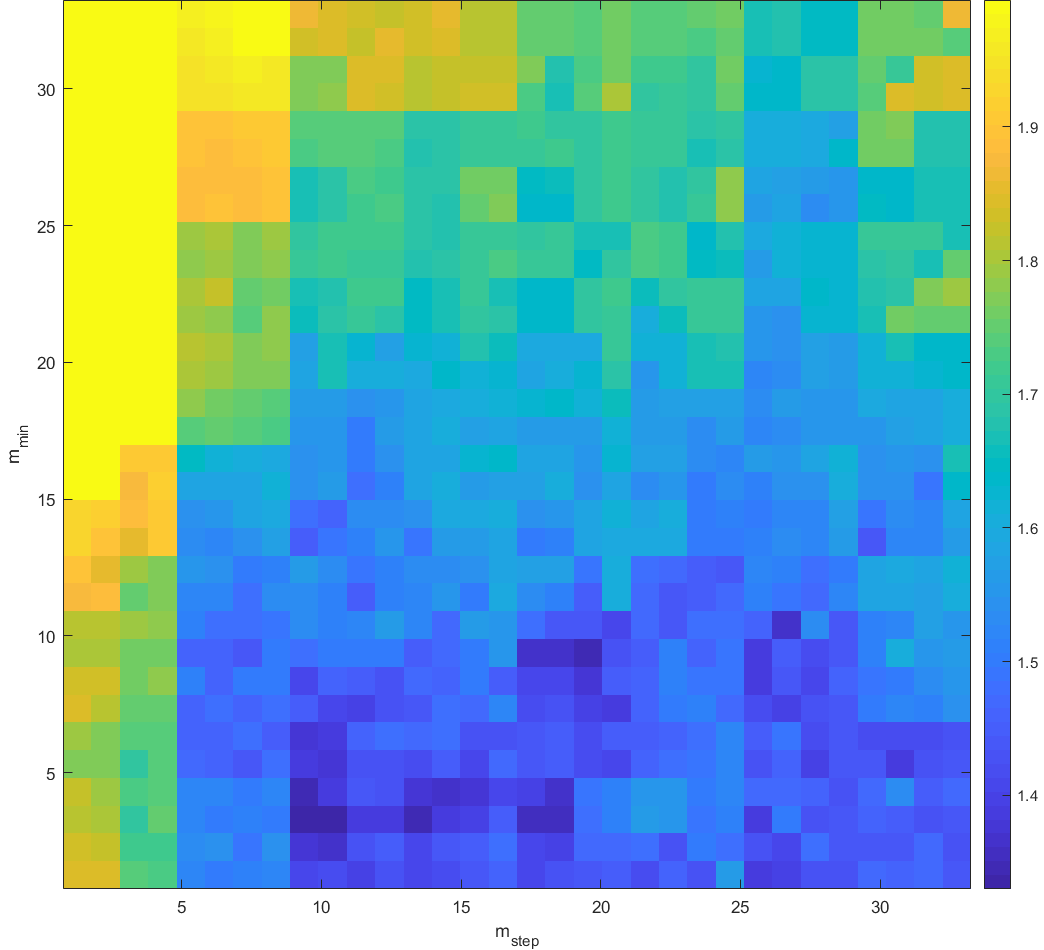}
\end{subfigure}
\begin{subfigure}{0.33\textwidth}
\centering
\includegraphics[width=\textwidth]{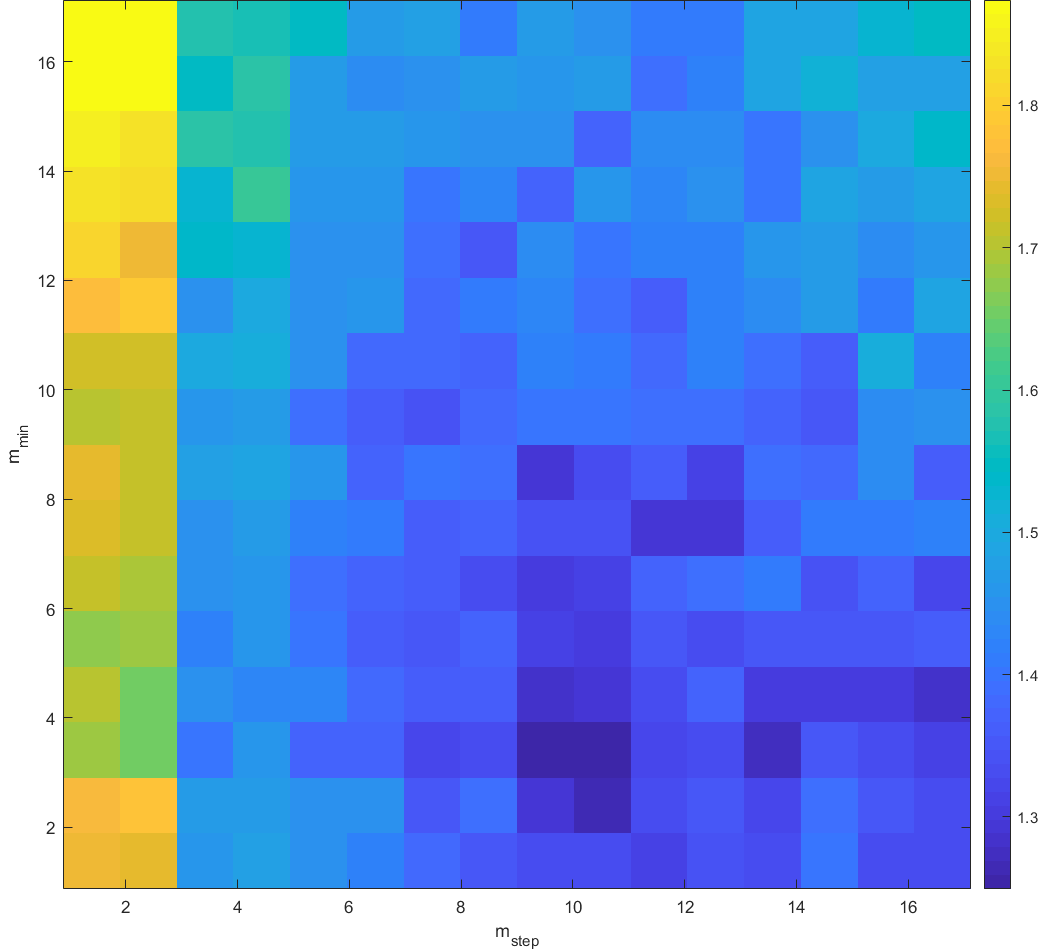}
\end{subfigure}
\caption{Average quadtrees calculated by the optimization sequence for the training matrices using $m_{\text{init}} = 10$.\\
Final parameters for the optimized PD-GMRES are $m_{\text{init}} = 10, m_{\min} = 3, m_{\text{step}} = 10, \alpha_p = -0.625, \alpha_d = 4.375$. The top row depicts the $(\alpha_p,\alpha_d)$-space, and the  bottom row the $(m_{\min},m_{\text{step}})$-space. \revision{The geometric means for the average matrix performance (cf.~Section \ref{subsec:matrix_averaging}) after each optimization step are: $1,\ 0.9270,\ 0.9060,\ 0.8957,\ 0.8957,\ 0.8953$. These values are normalized around the average performance after the first optimization step which already improves performance significantly over GMRES($m$). We observe that the gains in matrix performance decrease with further optimization steps and therefore we terminate the optimization procedure after three cycles.}} 
\label{fig:optimization_sequence}
\end{figure}

Figure~\ref{fig:optimization_sequence} shows the six averaged quadtrees for the optimization sequence for $m_{\text{init}} = 10$. For each optimization cycle the top quadtree shows the $(\alpha_p, \alpha_d)$-space and the bottom quadtree the $(m_{\min}, m_{\text{step}})$-space. Some resolution effects can be observed in particular for the $(m_{\min}, m_{\text{step}})$-space, where lower-resolution evaluations for certain matrices result in a vertical strip on the left side of the parameter space. Within this strip the additionally computed value at $m_{\min} = 10$, $m_{\text{step}} = 5$ can also be observed in the first cycle, which does indicate that additional evaluations in this area could find better performance than the current resolution suggests. Color scaling is set for each average quadtree individually with blue corresponding to the minimal value found and yellow corresponding to $1.5$ times the minimal value.

\begin{table}[H]
	\addtolength{\tabcolsep}{-0.2em}
    \begin{tabular}{c|c|cccccccccccccccc}
         \multicolumn{2}{c|}{\multirow{2}{*}{}} & \multicolumn{16}{c}{$m_{\text{step}}$} \\ \cline{3-18}
          \multicolumn{2}{c|}{} & 1 & 2 & 3 & 4 & 5 & 6 & 7 & 8 & 9 & 10 & 11 & 12 & 13 & 14 & 15 & 16\\ \hline
        \multirow{16}{*}{$m_{\min}$} & 1 & 1.75 & 1.75 & 1.46 & 1.48 & 1.45 & 1.42 & 1.38 & 1.35 & 1.34 & 1.33 & 1.32 & 1.34 & 1.33 & 1.40 & 1.33 & 1.33 \\
& 2 & 1.76 & 1.78 & 1.47 & 1.47 & 1.45 & 1.45 & 1.35 & 1.39 & 1.29 & 1.27 & 1.33 & 1.36 & 1.32 & 1.39 & 1.35 & 1.34 \\
& 3 & 1.69 & 1.65 & 1.40 & 1.46 & 1.37 & 1.38 & 1.32 & 1.33 & 1.26 & \textbf{1.25} & 1.32 & 1.33 & 1.27 & 1.35 & 1.33 & 1.31 \\
& 4 & 1.70 & 1.65 & 1.45 & 1.43 & 1.43 & 1.38 & 1.36 & 1.36 & 1.28 & 1.29 & 1.33 & 1.37 & 1.31 & 1.30 & 1.31 & 1.29 \\
& 5 & 1.68 & 1.68 & 1.42 & 1.46 & 1.40 & 1.36 & 1.35 & 1.37 & 1.31 & 1.31 & 1.35 & 1.34 & 1.35 & 1.35 & 1.35 & 1.37 \\
& 6 & 1.71 & 1.69 & 1.45 & 1.46 & 1.39 & 1.37 & 1.36 & 1.34 & 1.30 & 1.31 & 1.37 & 1.39 & 1.41 & 1.34 & 1.37 & 1.32 \\
& 7 & 1.73 & 1.71 & 1.45 & 1.47 & 1.42 & 1.41 & 1.36 & 1.37 & 1.34 & 1.34 & 1.29 & 1.30 & 1.36 & 1.41 & 1.41 & 1.42 \\
& 8 & 1.74 & 1.71 & 1.48 & 1.49 & 1.46 & 1.37 & 1.40 & 1.40 & 1.29 & 1.33 & 1.36 & 1.31 & 1.39 & 1.38 & 1.44 & 1.36 \\
& 9 & 1.71 & 1.71 & 1.46 & 1.47 & 1.39 & 1.37 & 1.34 & 1.38 & 1.41 & 1.40 & 1.39 & 1.39 & 1.37 & 1.35 & 1.44 & 1.45 \\
& 10 & 1.72 & 1.72 & 1.50 & 1.50 & 1.45 & 1.38 & 1.38 & 1.37 & 1.42 & 1.41 & 1.39 & 1.42 & 1.39 & 1.36 & 1.50 & 1.42 \\
& 11 & 1.77 & 1.79 & 1.45 & 1.49 & 1.45 & 1.46 & 1.39 & 1.41 & 1.43 & 1.39 & 1.36 & 1.42 & 1.44 & 1.47 & 1.41 & 1.49 \\
& 12 & 1.81 & 1.76 & 1.54 & 1.52 & 1.45 & 1.45 & 1.39 & 1.35 & 1.44 & 1.40 & 1.42 & 1.42 & 1.46 & 1.47 & 1.44 & 1.46 \\
& 13 & 1.83 & 1.82 & 1.53 & 1.60 & 1.46 & 1.46 & 1.40 & 1.43 & 1.37 & 1.46 & 1.43 & 1.44 & 1.40 & 1.49 & 1.47 & 1.49 \\
& 14 & 1.85 & 1.83 & 1.58 & 1.58 & 1.47 & 1.47 & 1.46 & 1.45 & 1.45 & 1.37 & 1.44 & 1.44 & 1.40 & 1.45 & 1.50 & 1.54 \\
& 15 & 1.92 & 1.92 & 1.54 & 1.58 & 1.47 & 1.44 & 1.45 & 1.47 & 1.46 & 1.47 & 1.39 & 1.42 & 1.49 & 1.52 & 1.48 & 1.48 \\
& 16 & 1.92 & 1.93 & 1.57 & 1.56 & 1.54 & 1.47 & 1.48 & 1.41 & 1.46 & 1.45 & 1.41 & 1.41 & 1.49 & 1.49 & 1.53 & 1.55 \\ 
    \end{tabular}
    \caption{PD-GMRES runtime in the $(m_{\min}, m_{\text{step}})$-space for $m_{\text{init}} = 10, \alpha_p = -0.625, \alpha_d = 4.375$, corresponding to the last subfigure in Figure~\ref{fig:optimization_sequence}. Values are averaged across all training matrices for which convergence is achieved and then normalized against the minimal value for each individual matrix. Best performance is found for $m_{\min} = 3, m_{\text{step}} = 10$.}
    \label{tab:mmin_mstep_optimization}
\end{table}

Table~\ref{tab:mmin_mstep_optimization} shows the runtime values from the last subfigure in Figure~\ref{fig:optimization_sequence}, with a minimum of $1.25$ found for $m_{\min} = 3, m_{\text{step}} = 10$. A value of $1.0$ would correspond to a parameter pair for which each matrix has minimal runtime. The optimal parameter pair being found at a value of $1.25$ means that on average one can expect performance to be $25$ percent worse for the generic choice of $(m_{\min}, m_{\text{step}})$ compared to the optimal parameters one would find for each matrix individually.

\begin{figure}
\begin{subfigure}{\textwidth}
\centering
\includegraphics[width=\textwidth]{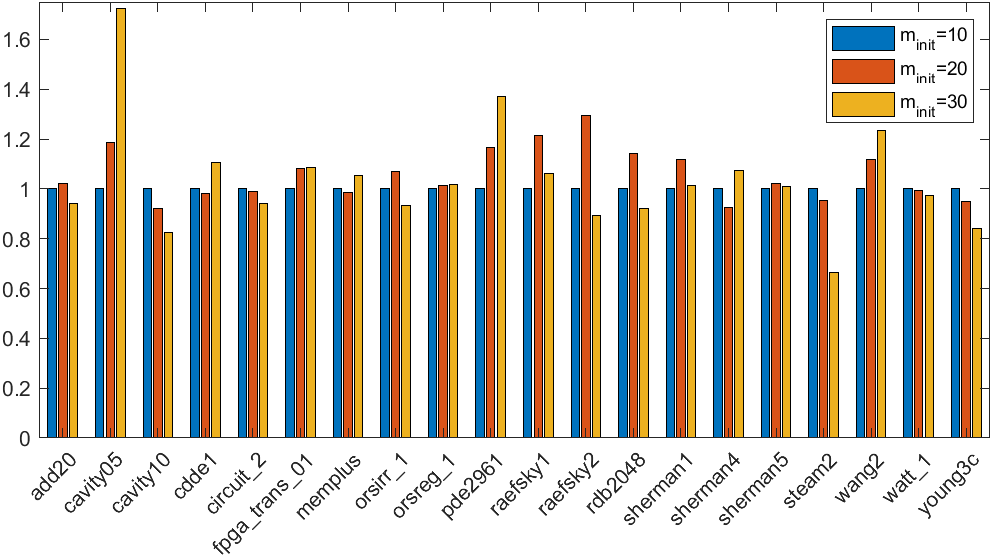}
\end{subfigure}
\caption{Comparison of PD-GMRES performance for training matrices, optimized using $m_{\text{init}} = 10, 20, 30$:\\
$m_{\text{init}} = 10, m_{\min} = 3, m_{\text{step}} = 10, \alpha_p = -0.625, \alpha_d = 4.375$, geometric mean = $1$\\
$m_{\text{init}} = 20, m_{\min} = 1, m_{\text{step}} = 11, \alpha_p = -0.625, \alpha_d = 3.125$, geometric mean = $1.0522$\\
$m_{\text{init}} = 30, m_{\min} = 5, m_{\text{step}} = 5, \alpha_p = -0.625, \alpha_d = 5.625$, geometric mean = $1.0142$\\}
\label{fig:comparing_minit}
\begin{subfigure}{\textwidth}
\includegraphics[width=\textwidth]{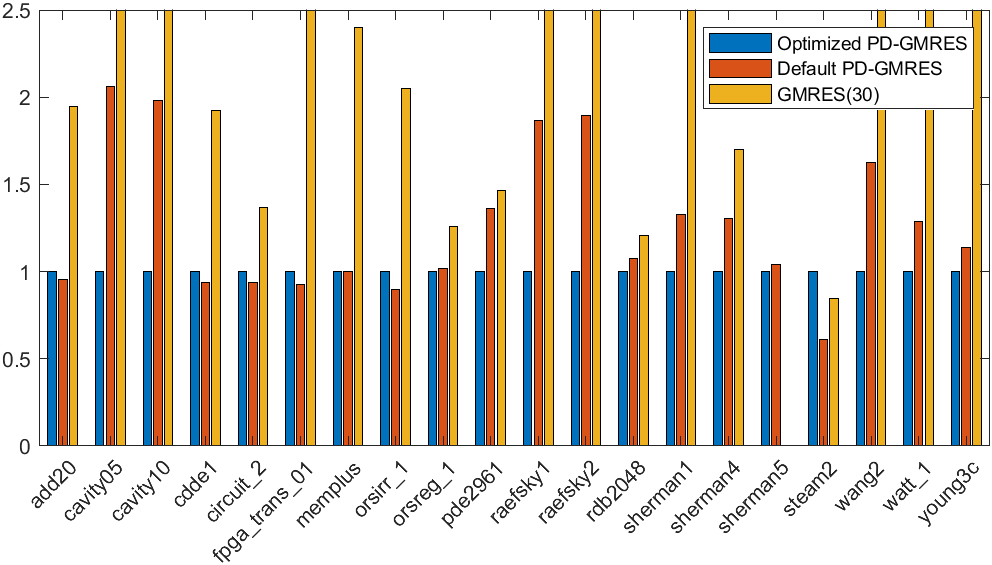}
\end{subfigure}
\caption{Comparison of optimized PD-GMRES performance against default PD-GMRES and GMRES(30):\\
Optimized PD-GMRES uses the parameters found for $m_{\text{init}} = 10$ (cf.~Figure~\ref{fig:comparing_minit}).\\
Default PD-GMRES uses $m_{\text{init}} = 30, m_{\min} = 1, m_{\text{step}} = 3, \alpha_p = -3, \alpha_d = 9$.
}
\label{fig:training_nunez_gmres_comp}
\end{figure}

Figure~\ref{fig:comparing_minit} shows the PD-GMRES performance for the training matrices with three values of $m_{\text{init}}$ ($m_{\text{init}} = 10, 20, 30$). It can be observed that for all three choices of $m_{\text{init}}$ the optimized PD-GMRES performs similarly well for most matrices, with $m_{\text{init}} = 10$ being the optimal choice on average. The other PD-GMRES parameters are also similar for all three values of $m_{\text{init}}$, indicating that the choice of $m_{\text{init}}$ is not decisive for the average performance of PD-GMRES for the training matrices.

Figure~\ref{fig:training_nunez_gmres_comp} shows the normalized performance of the optimized PD-GMRES compared to the default PD-GMRES using the parameters chosen in \cite{CuevasNunez2018} ($m_{\text{init}} = 30, m_{\min} = 1, m_{\text{step}} = 3, \alpha_p = -3, \alpha_d = 9$) and to GMRES($30$). The geometric means are $1$ for optimized PD-GMRES, $1.2020$ for default PD-GMRES, and $2.7596$ for GMRES(30). We note, that GMRES(30) does not converge for \textit{sherman5} (iteration stagnates) and therefore it was excluded when calculating the corresponding geometric mean. The optimized PD-GMRES thus performs better than both other algorithms on average and there exists only one matrix (\textit{steam2}) for which the optimized PD-GMRES performs notably worse.

However, when comparing PD-GMRES to GMRES($m$), taking values other than $m = 30$ is useful to measure the quality of the optimized PD-GMRES. Note that GMRES($30$) is not the best performing GMRES($m$) for all matrices. This means that outperforming GMRES($30$) but being outperformed by some other GMRES($m$) would be an indicator of poor quality of the optimized PD-GMRES.

\begin{table}
\addtolength{\tabcolsep}{-0.1em}
\begin{tabular}{c|cccccc}
name & PD-GMRES & GMRES(10) & GMRES(20) & GMRES(30) & GMRES(50) & GMRES(100)\\
\hline
add20 &  \textbf{0.0297} & 0.1221  & 0.0897  & 0.0579 & 0.0463 & 0.0555  \\
cavity05 &  \textbf{0.0518} & 4.0057  & 0.8785  & 0.3778 & 1.1493 & 0.4035  \\
cavity10 &  \textbf{0.2586} & 52.822  & 29.639 & 6.7228 & 5.0075 & 11.419 \\
cdde1 &  0.0084 & 0.3228  & 0.0289  & 0.0162 & 0.0083 & \textbf{0.0073}  \\
circuit\_2 &  0.1559 & 1.0721  & 0.5099  & 0.2128 & \textbf{0.1420} & 0.1449  \\
fpga\_trans\_01 &  \textbf{0.0357} & 0.6626  & 0.2559  & 0.1336 & 0.0593 & 0.0526  \\
memplus &  \textbf{1.2670} & 6.0138  & 4.4135  & 3.0401 & 3.8703 & 3.5727  \\
orsirr\_1 &  \textbf{0.0472} & 0.1872  & 0.1823  & 0.0968 & 0.0633 & 0.0677  \\
orsreg\_1 &  \textbf{0.0115} & 0.0154  & 0.0151  & 0.0145 & 0.0145 & 0.0195  \\
pde2961 &  \textbf{0.0138} & 0.0182  & 0.0160  & 0.0203 & 0.0334 & 0.0612  \\
raefsky1 &  \textbf{0.1493} & 3.0280  & 1.7792  & 0.7574 & 0.7875 & 0.8685  \\
raefsky2 &  \textbf{0.3096} & 1.3139  & 0.9220  & 1.0826 & 1.2424 & 1.0277  \\
rdb2048 &  0.0132 & 0.0161  & 0.0147  & 0.0160 & 0.0123 & \textbf{0.0118}  \\
sherman1 &  \textbf{0.0178} & 0.1650  & 0.0918  & 0.0706 & 0.0648 & 0.0612  \\
sherman4 &  0.0069 & 0.0142  & 0.0128  & 0.0117 & 0.0093 & \textbf{0.0062}  \\
sherman5 &  \textbf{1.4181} & -     & -     & -    & 5.2856 & 3.9816  \\
steam2 &  0.0065 & -     & 15.593 & 0.0055 & 0.0057 & \textbf{0.0046}  \\
wang2 &  \textbf{0.0342} & 14.390 & 0.1315  & 0.1296 & 0.1310 & 0.0852  \\
watt\_1 &  \textbf{0.0231} & 0.2555  & 0.1487  & 0.1215 & 0.0936 & 0.0496  \\
young3c &  \textbf{0.0468} & 0.3404  & 0.1816  & 0.1381 & 0.1174 & 0.1190 
\end{tabular}
\caption{Runtimes for optimized PD-GMRES compared to various GMRES($m$) for the training matrices. For each problem the best performing algorithm is marked in bold. Missing entries indicate failure to converge within 240 seconds.}
\label{tab:training_gmres_comp}
\end{table}

\begin{table}
\centering
\begin{tabular}{|cccccc|}
	\hline
	PD-GMRES & GMRES(10) & GMRES(20) & GMRES(30) & GMRES(50) & GMRES(100)\\
	\hline
	$1$ & $9.8105$ & $6.1013$ & $2.7596$ & $2.5647$ & $2.3776$\\
	\hline
\end{tabular}
\caption{Geometric means of normalized data from Table~\ref{tab:training_gmres_comp} (excluding non-converging matrices column-wise).}
\label{tab:training_gmres_geomeans}
\end{table}

Table~\ref{tab:training_gmres_comp} compares the runtimes for the optimized PD-GMRES, as previously determined, to GMRES($m$) for several different values of $m$. Missing entries indicate that GMRES($m$) did not achieve convergence in a reasonable amount of time ($240$ seconds). This either means that the iteration has reached true stagnation, so no amount of additional time could result in convergence, or that the iteration merely converges very slowly. Overall, for the majority of matrices the optimized PD-GMRES performs better than all GMRES, however, there are some matrices which exhibit the behavior that GMRES($100$) actually performs better than PD-GMRES. These matrices shall be referred to as "problematic" from here on.

To further quantify the advantage of PD-GMRES over fixed-restart GMRES, the geometric means of normalized data from Table~\ref{tab:training_gmres_comp} are calculated for the different GMRES($m$) and shown in Table~\ref{tab:training_gmres_geomeans}. For non-converging GMRES the corresponding rows were discarded when calculating the geometric mean. Therefore, the values for $m = 10, 20, 30$ should be considered lower bounds on the geometric means, i.e., PD-GMRES outperforms these GMRES($m$) by an even larger factor.

\subsection{Evaluating performance of optimized PD-GMRES on test matrices}
To test the optimized PD-GMRES parameters which were found by optimizing the PD-GMRES for the training matrices, a new set of matrices is selected from \cite{MatrixDatabase}. The selection process was done in two parts. First, matrices relating to the training matrices were selected manually and additional matrices were chosen randomly. Then, matrices for which no convergence was achieved for any GMRES or PD-GMRES in reasonable time spans were discarded, yielding the test matrix set in Table~\ref{tab:test_matrices}.

\begin{table}[h]
    \centering
    \begin{tabular}{|c|c|c|c|c|}
    \hline
        Name & n & Non-zeros & Condition & Application area \\ \hline
ACTIVSg2000 &  $4000$ & $28505$ & $1.38e+07$ & power network problem \\ \hline
CAG\_mat1916 &  $1916$ & $195985$ & $1.37e+08$ & combinatorial problem \\ \hline
Chem97ZtZ &  $2541$ & $7361$ & $4.63e+02$ & statistical/mathematical problem \\ \hline
add32 &  $4960$ & $19848$ & $2.14e+02$ & circuit simulation problem \\ \hline
adder\_dcop\_01 &  $1813$ & $11156$ & $1.30e+08$ & subsequent circuit simulation problem \\ \hline
bcsstk34 &  $588$ & $21418$ & $5.09e+04$ & structural problem \\ \hline
bfwa398 &  $398$ & $3678$ & $7.58e+03$ & electromagnetics problem \\ \hline
cavity01 &  $317$ & $7280$ & $6.91e+04$ & computational fluid dynamics problem sequence \\ \hline
cdde2 &  $961$ & $4681$ & $1.29e+02$ & subsequent computational fluid dynamics problem \\ \hline
circuit\_1 &  $2624$ & $35823$ & $3.27e+05$ & circuit simulation problem \\ \hline
ex37 &  $3565$ & $67591$ & $2.26e+02$ & computational fluid dynamics problem \\ \hline
fpga\_trans\_02 &  $1220$ & $7382$ & $2.91e+04$ & subsequent circuit simulation problem \\ \hline
linverse &  $11999$ & $95977$ & $8.24e+03$ & statistical/mathematical problem \\ \hline
msc01440 &  $1440$ & $44998$ & $7.00e+06$ & structural problem \\ \hline
nasa1824 &  $1824$ & $39208$ & $6.63e+06$ & structural problem \\ \hline
nasa2146 &  $2146$ & $72250$ & $3.97e+03$ & structural problem \\ \hline
nasa2910 &  $2910$ & $174296$ & $1.76e+07$ & structural problem \\ \hline
orsirr\_2 &  $886$ & $5970$ & $1.67e+05$ & computational fluid dynamics problem \\ \hline
pde900 &  $900$ & $4380$ & $2.93e+02$ & 2D/3D problem \\ \hline
pores\_3 &  $532$ & $3474$ & $6.64e+05$ & computational fluid dynamics problem \\ \hline
rdb5000 &  $5000$ & $29600$ & $4.30e+03$ & computational fluid dynamics problem \\ \hline
steam1 &  $240$ & $2248$ & $2.99e+07$ & computational fluid dynamics problem \\ \hline
wang1 &  $2903$ & $19093$ & $1.69e+04$ & semiconductor device problem sequence \\ \hline
wang3 &  $26064$ & $177168$ & $1.07e+04$ & semiconductor device problem \\ \hline
watt\_2 &  $1856$ & $11550$ & $1.37e+12$ & computational fluid dynamics problem \\ \hline
young1c &  $841$ & $4089$ & $9.85e+02$ & acoustics problem \\ \hline
    \end{tabular}
    \caption{Properties of the set of test matrices, containing new problems as well as problems related to those in the training set.}
    \label{tab:test_matrices}
\end{table}

\begin{table}
\addtolength{\tabcolsep}{-0.1em}
\begin{tabular}{c|cccccc}
name & PD-GMRES & GMRES(10) & GMRES(20) & GMRES(30) & GMRES(50) & GMRES(100)\\
\hline
ACTIVSg2000     & \textbf{0.8631} & 37.719 & -     & 15.253 & 18.005 & 11.548 \\
CAG\_mat1916    & 0.5497 & -     & -     & -     & -     & \textbf{0.5422}  \\
Chem97ZtZ       & \textbf{0.0050} & 0.0063  & 0.0057  & 0.0062  & 0.0074  & 0.0102  \\
add32           & 0.0185 & \textbf{0.0182}  & 0.0217  & 0.0279  & 0.0357  & 0.0581  \\
adder\_dcop\_01 & 0.2419 & -     & -     & -     & 0.5972  & \textbf{0.2308}  \\
bcsstk34        & \textbf{0.0321} & 0.2869  & 0.1354  & 0.1014  & 0.0751  & 0.0501  \\
bfwa398         & 0.0162 & 0.9326  & 0.1091  & 0.0480  & 0.0586  & \textbf{0.0110}  \\
cavity01        & 0.0100 & 0.0435  & 0.0500  & 0.0338  & 0.0345  & \textbf{0.0067}  \\
cdde2           & 0.0024 & \textbf{0.0020}  & 0.0036  & 0.0053  & 0.0056  & 0.0029  \\
circuit\_1      & \textbf{0.0171} & 0.0397  & 0.0218  & 0.0206  & 0.0201  & 0.0215  \\
ex37            & \textbf{0.0110} & 0.0169  & 0.0126  & 0.0132  & 0.0143  & 0.0192  \\
fpga\_trans\_02 & \textbf{0.0442} & 0.5130  & 0.2972  & 0.1110  & 0.0909  & 0.0628  \\
linverse        & \textbf{7.8090} & -     & -     & -     & 170.61     & 138.54     \\
msc01440        & \textbf{0.8951} & -     & 19.512 & 15.180 & 10.938 & 8.3156  \\
nasa1824        & \textbf{0.7689} & -     & -     & -     & 7.0695  & 3.9878  \\
nasa2146        & \textbf{0.0229} & 0.0676  & 0.0402  & 0.0339  & 0.0328  & 0.0403  \\
nasa2910        & \textbf{2.8038} & -     & 102.56     & -     & 40.150 & 21.559 \\
orsirr\_2       & \textbf{0.0270} & 0.2592  & 0.0785  & 0.0361  & 0.0331  & 0.0329  \\
pde900          & \textbf{0.0028} & 0.0031  & 0.0033  & 0.0041  & 0.0059  & 0.0055  \\
pores\_3        & \textbf{0.2031} & -     & -     & -     & -     & 0.2203  \\
rdb5000         & \textbf{0.2241} & 0.4378  & 0.3218  & 0.2345  & 0.2622  & 0.2556  \\
steam1          & 0.2200 & -     & -     & -     & 53.924     & \textbf{0.1433}  \\
wang1           & 0.1033 & -     & -     & 0.0729  & \textbf{0.0568}  & 0.0844  \\
wang3           & \textbf{0.4873} & 0.6238  & 0.6429  & 0.6786  & 0.9127  & 1.1876  \\
watt\_2         & \textbf{0.0478} & 0.5080  & 0.3434  & 0.2750  & 0.3360  & 0.3002  \\
young1c         & \textbf{0.1954} & 0.7906  & 0.6262  & 0.7526  & 0.6941  & 0.5830 
\end{tabular}
\caption{Runtimes for optimized PD-GMRES compared to various GMRES($m$) for the test matrices. For each problem the best performing algorithm is marked in bold. Missing entries indicate failure to converge within 240 seconds.}
\label{tab:test_gmres_comp}
\end{table}

Table~\ref{tab:test_gmres_comp} shows the same kind of comparison as Table~\ref{tab:training_gmres_comp}, now for the test matrices. One finds that there are more matrices for which some or most GMRES($m$) fail to converge in the given time span of $240$ seconds. As found for the training matrices, some of these convergence failures are indicative of stagnation, meaning that no amount of permitted runtime would yield convergence. And in the cases where the iteration does not stagnate, the rate of convergence tends to be very low. There is also an additional number of problematic matrices for which GMRES($100$) outperforms the optimized PD-GMRES.

\begin{table}
\centering
\begin{tabular}{|cccccc|}
	\hline
	PD-GMRES & GMRES(10) & GMRES(20) & GMRES(30) & GMRES(50) & GMRES(100)\\
	\hline
	$1$ & $4.5065$ & $3.0607$ & $2.3228$ & $3.6387$ & $2.0761$\\
	\hline
\end{tabular}
\caption{Geometric means of normalized data for the test matrices (excluding non-converging matrices column-wise).}
\label{tab:test_gmres_geomeans}
\end{table}

The geometric means of the data, normalized around PD-GMRES performance, are shown in Table~\ref{tab:test_gmres_geomeans}. As previously in Table~\ref{tab:training_gmres_geomeans}, the means are calculated column-wise with non-converging entries being discarded, which means that these values can not be used as a direct comparison between different GMRES($m$). One can observe that PD-GMRES performs significantly better than even the best GMRES($m$) (which is again GMRES($100$)). There are two matrices for which GMRES($10$) is actually the best performing algorithm. However, in both of these cases the advantage over the optimized PD-GMRES is very small so they are not treated separately. Additionally, due to the number of missing entries (for which GMRES($m$) did not converge), the calculated geometric means actually understate how poor GMRES performance can be expected to be on average.

Overall we find that the optimized PD-GMRES performs very well for the whole set of test matrices, despite having been optimized for a different matrix set. Because of this we can recommend using PD-GMRES with $m_{\text{init}} = 10, m_{\min} = 3, m_{\text{step}} = 10, \alpha_p = -0.625, \alpha_d = 4.375$ as an iterative solver for a wide set of matrices. Given that we found problematic matrices where this PD-GMRES performed worse than GMRES($100$) we apply our optimization procedure to those matrices separately.

\subsection{Classification and optimization of problematic matrices}\label{sec:problematic}
We found matrices in both the training and the test set for which GMRES($100$) outperformed the optimized PD-GMRES. These problematic matrices are evaluated separately, both to further test the effectiveness of the optimization procedure and to provide an alternative set of PD-GMRES parameters to be used for similar matrices. The problematic training matrices are \textit{cdde1}, \textit{circuit\_2}, \textit{rdb2048}, \textit{sherman4}, and \textit{steam2}. The problematic test matrices are \textit{adder\_dcop\_01}, \textit{bfwa398}, \textit{CAG\_mat1916}, \textit{cavity01}, \textit{steam1}, and \textit{wang1}.

Figure~\ref{fig:clustering} shows the location of these matrices in a scatter plot showing GMRES($100$) performance against the matrix condition number. The apparent clustering of the problematic matrices suggests some similarities between them. It would be desirable to find a way of classifying these clusters without having to calculate GMRES($100$) to convergence for a given matrix.

\begin{wrapfigure}{r}{0.56\textwidth}
\centering
\includegraphics[width=0.535\textwidth]{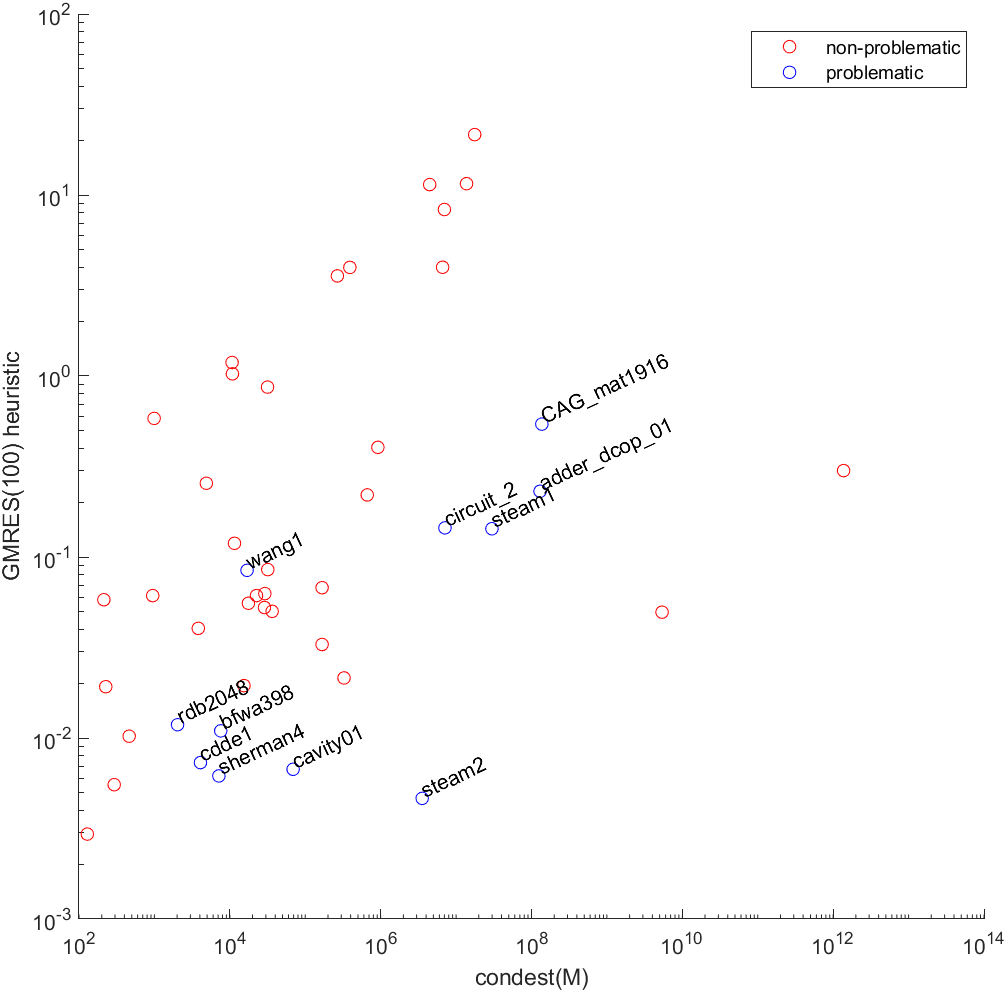}
\caption{GMRES(100) performance (logarithmically) over matrix condition. For the named problems with blue marker GMRES(100) outperformed the optimized PD-GMRES.}
\label{fig:clustering}
\end{wrapfigure}

A natural consideration would be to perform only a single iteration of GMRES($100$), hoping that the initial change in the residual is indicative of the performance of the full iteration. For the training matrices, classification is easily possible by assessing the residual after a single iteration of GMRES($100$) and $10$ iterations of GMRES($10$). Factoring these values with the runtime for these iterations also gives a valid classifier.

However, applying this classifier to the test matrices as well as all other attempts of creating a simple classifier by evaluating residuals and runtimes (for a single iteration of GMRES($100$) or $10$ iterations of GMRES($10$)) as well as the matrix norm and condition number were unsuccessful for the test matrices. Specifically, the test matrix set contains several matrices for which GMRES(100) and GMRES(10) actually perform similarly well initially (when factoring in runtime), which is not the case for the training matrices. This does not mean that there is no efficient classifier for the problematic matrices, however, it does suggest that finding one might be difficult. Given the relatively small amount of problematic matrices any classifier using more complex evaluations than those considered here might be prone to overfitting and thereby fail to classify additional similarly problematic matrices.

While it is unfortunate that no efficient classifier was found for separating problematic from non-problematic matrices, we nevertheless perform a new optimization procedure to find better performance for the problematic matrices. For this, we perform the optimization procedure for the problematic training matrices. It is not to be expected that this will actually provide best performance for the problematic test matrices, since the ones for training do seem to behave differently as was found when trying to classify them.

\begin{figure}
\centering
\includegraphics[width=\textwidth]{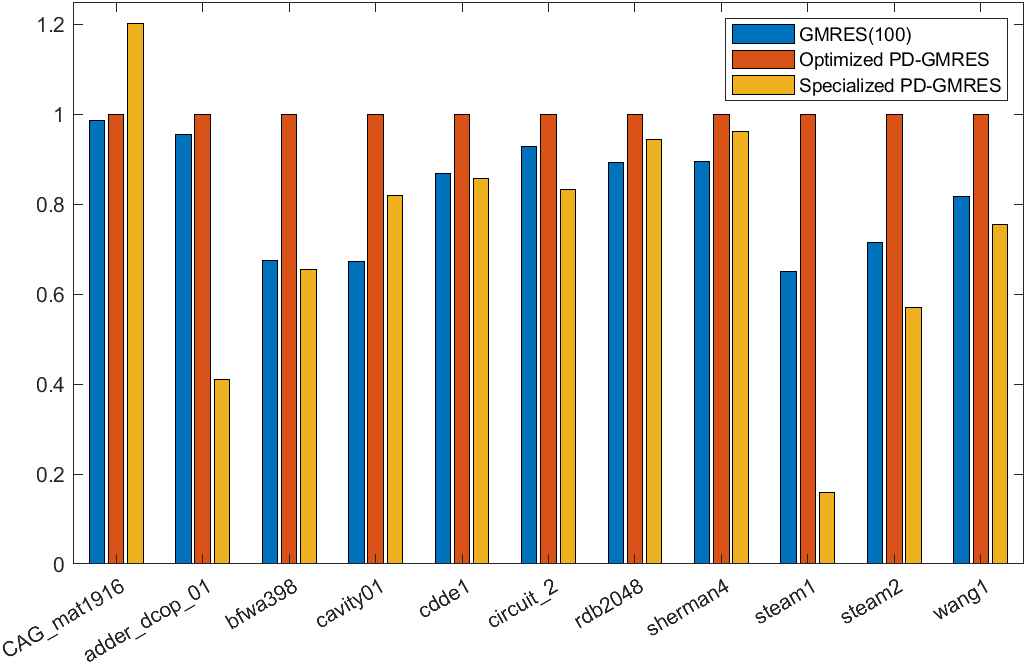}
\caption{Comparing performance of GMRES(100) and the previously optimized PD-GMRES against the specialized PD-GMRES found by optimizing the problematic training matrices. Normalizing around the optimized PD-GMRES the geometric means are: $0.8147$ for GMRES(100), $1$ for optimized PD-GMRES, $0.6676$ for the specialized PD-GMRES.}
\label{fig:problematic_optimized}
\end{figure}

The result of the optimization sequence, evaluated for $m_{\text{init}} = 30$ and $m_{\text{init}} = 50$ (smaller $m_{\text{init}}$ are unlikely to be useful given that the problematic matrices perform well for GMRES($100$)), gives a specialized PD-GMRES with optimal parameters found to be $m_{\text{init}} = 30, m_{\min} = 33, m_{\text{step}} = 39, \alpha_p = -42.5, \alpha_d = 0$. This means that the derivative term \eqref{eq:derivative_term} in the controller is effectively discarded and $m_1$ is effectively $69$ as the reset condition is triggered in the first PD-GMRES iteration. Evaluating the quadtree sequence reveals that the second optimization in the $(\alpha_p, \alpha_d)$-space does not provide any meaningful distinction between the values. The algorithm then chooses a corner point in the quadtree (to better align with actually calculated values in low resolutions) which yields $\alpha_d = 0$.

The geometric means for the optimization sequence in Figure~\ref{fig:problematic_optimized} show that on average, the specialized PD-GMRES performs significantly better than the previous optimized PD-GMRES and outperforms GMRES($100$) as well. Improvements over the previous PD-GMRES are found for ten out of the eleven problematic matrices, and while there are three matrices for which GMRES($100$) is still better than the new PD-GMRES, the differences there are small, especially when compared to the gains found for some of the other matrices.

This is a remarkably positive result, considering that the problematic training matrices are clearly not a good model for the general kind of problematic matrices found in the test set, and further considering that the algorithm chooses a somewhat arbitrary pair of values in the $(\alpha_p, \alpha_d)$-space. Being a well-performing algorithm despite these shortcomings is a positive indicator for the reliability of the optimization procedure.

Performing an additional optimization for just the problematic test matrices can provide even better performance. Compared to the optimized PD-GMRES we find geometric means of $0.4330$ and $0.4021$ for $m_{\text{init}} = 30$ and $50$ respectively. This means that optimizing the set of problematic test matrices yields a PD-GMRES which outperforms the specialized one by a significant margin. This is not surprising, as this optimization procedure does in fact differentiate the $(\alpha_p, \alpha_d)$-space and selects a specific, optimal value pair. Optimal parameters are found to be $m_{\text{init}} = 50, m_{\min} = 51, m_{\text{step}} = 76, \alpha_p = -41.25, \alpha_d = 29.375$. Unlike previously, $\alpha_d$ is not zero. We do find similar behavior where $m_{\text{init}}$ is smaller than $m_{\min}$ which results in a jump in the first iteration.

With the problematic training matrices optimized, it is natural to perform another optimization procedure for the non-problematic training matrices and evaluate the resulting parameters for only the non-problematic training and test matrices. Doing so gives the same parameters for $m_{\text{init}} = 10$ as the initial optimized PD-GMRES. For higher $m_{\text{init}}$ the other parameters differ from the initial evaluation, however, PD-GMRES performs worse for those than it did for $m_{\text{init}} = 10$. This suggests that the initial optimization already gave good performance for the non-problematic matrices anyways, rendering additional optimization less relevant.

\revision{\subsection{Large matrix test}\label{sec:large}
Practical problems often involve larger matrices than those evaluated so far and also require the use of preconditioners. We select a new set of matrices from \cite{MatrixDatabase}, focusing on large matrices (roughly an order of magnitude larger than previous ones), which all relate to computational fluid dynamics problems. For all matrices in this set (cf.~Table~\ref{tab:large_matrices}) GMRES($m$) does not converge within a reasonable time when not using a preconditioner, so preconditioning is necessary. We choose ILU-based preconditioners for all large matrices, using ILU(0) (zero-fill) for problems without zeros on the diagonal and ILUTP (thresholding and pivoting) \cite{Saad2003} for the remaining ones. Here, we reduce the threshold parameter $\tau$ with increasing matrix size to keep the runtime of all matrices somewhat similar. By comparing our optimized PD-GMRES (cf.~Section~\ref{sec:training data}) to GMRES($m$) we find that, for a majority of the large matrices, GMRES($100$) performs better than PD-GMRES. This means that by our classification these matrices are considered problematic therefore suitable for applying our specialized PD-GMRES (cf.~Section~\ref{sec:problematic}). Table~\ref{tab:large_gmres_comp} shows that the specialized PD-GMRES outperforms all considered GMRES($m$), even for non-problematic large matrices where the optimized PD-GMRES already performs well. The geometric mean of GMRES($100$) runtime ($m=100$ being the only value for which GMRES($m$) converges for all large matrices) relative to the runtime of the specialized PD-GMRES is $1.6006$.

\begin{table}[h]
\centering
\revision{
\begin{tabular}{|c|c|c|c|c|}
\hline
Name & n & Non-zeros & Condition & Preconditioner \\ \hline
Goodwin\_040 &  $17922$ & $561677$ & $2.56e+06$ & ILUTP, $\tau = 0.05$ \\ \hline
Goodwin\_054 &  $32510$ & $1030878$ & $6.30e+06$ & ILUTP, $\tau = 0.04$ \\ \hline
Goodwin\_071 &  $56021$ & $1797934$ & $1.43e+07$ & ILUTP, $\tau = 0.03$ \\ \hline
Goodwin\_095 &  $100037$ & $3226066$ & $3.43e+07$ & ILUTP, $\tau = 0.02$ \\ \hline
Goodwin\_127 &  $178437$ & $5778545$ & $8.20e+07$ & ILUTP, $\tau = 0.01$ \\ \hline
venkat25 &  $62424$ & $1717763$ & $1.22e+08$ & ILU(0) \\ \hline
venkat50 &  $62424$ & $1717777$ & $5.32e+08$ & ILU(0) \\ \hline
raefsky3 &  $21200$ & $1488768$ & $5.36e+11$ & ILU(0) \\ \hline
water\_tank &  $60740$ & $2035281$ & $2.52e+09$ & ILU(0) \\ \hline
poisson3Db &  $85623$ & $2374949$ & $1.66e+05$ & ILU(0) \\ \hline
\end{tabular}}
\caption{\revision{Properties of the set of large matrices which are evaluated with preconditioners. All matrices are related to computational fluid dynamics problems. Since the \textit{Goodwin} matrices have zeros on the diagonal, ILU(0) is not applicable and we choose ILUTP instead. We lower the ILUTP threshold $\tau$ with increasing matrix size to keep the PD-GMRES runtime balanced.}}
\label{tab:large_matrices}
\end{table}

\begin{table}
\revision{
\begin{tabular}{c||cc|ccccc}
 & \multicolumn{2}{c|}{PD-GMRES} & \multicolumn{5}{c}{GMRES($m$)} \\
Name & Optimized & Specialized & $m = 10$ & $m = 20$ & $m = 30$ & $m = 50$ & $m = 100$\\
\hline
Goodwin\_040 & 1.9403 & \textbf{0.3513} & - & - & - & 1.6612 & 0.6593\\
Goodwin\_054 & 5.5042 & \textbf{0.7044} & - & - & - & - & 0.9228\\
Goodwin\_071 & 1.3334 & \textbf{0.4419} & - & - & 3.5029 & 1.1785 & 0.7436\\
Goodwin\_095 & 8.2934 & \textbf{1.4191} & - & - & 10.294 & 5.1581 & 1.9296\\
Goodwin\_127 & 46.832 & \textbf{3.8337} & - & - & - & - & 8.0535\\
venkat25 & \textbf{0.5007} & 0.6679 & 0.7832 & 0.8158 & 0.8721 & 0.9988 & 1.1896\\
venkat50 & \textbf{0.7156} & 0.9495 & 1.1685 & 1.1585 & 1.2641 & 1.4927 & 1.8642\\
raefsky3 & 0.2240 & \textbf{0.1861} & 1.5778 & 0.4343 & 0.3713 & 0.3332 & 0.2732\\
water\_tank & 19.893 & \textbf{2.9135} & - & - & - & - & 4.1406\\
poisson3Db & \textbf{0.6422} & 0.6984 & 1.4529 & 1.1387 & 0.9429 & 0.9364 & 0.8939
\end{tabular}}
\caption{\revision{Runtimes for the set of large matrices using GMRES($m$) with $m = 10, 20, 30, 50, 100$ and PD-GMRES with optimized and specialized parameters from previous sections. For each problem the best performing algorithm is marked in bold. Missing entries indicate failure to converge within $240$ seconds.}}
\label{tab:large_gmres_comp}
\end{table}

For a comparison of the total cost of solving the linear system, the cost of calculating the ILU-based preconditioners would have to be added. We did not include this cost in our comparison as the preconditioners were calculated once in advance and were also not specifically optimized beyond general suitability.}

\subsection{Evaluating a different parameter quadrant} \label{subsec:2q}
All previous analysis focused on the $(\alpha_p < 0, \alpha_d > 0)$-quadrant. We now turn to the $(\alpha_p > 0, \alpha_d > 0)$-quadrant which is evaluated for $m_{\text{init}} = 10$ and $m_{\text{init}} = 30$ using \revision{the same training and test matrix sets,} the same initial choices for $m_{\min}$ and $m_{\text{step}}$ and the same quadtree size and resolution as before, but with $\alpha_p$ being evaluated in the interval $[0, 40]$.

\begin{wrapfigure}{r}{0.6\textwidth}
\begin{subfigure}{0.3\textwidth}
\centering
\includegraphics[width=\textwidth]{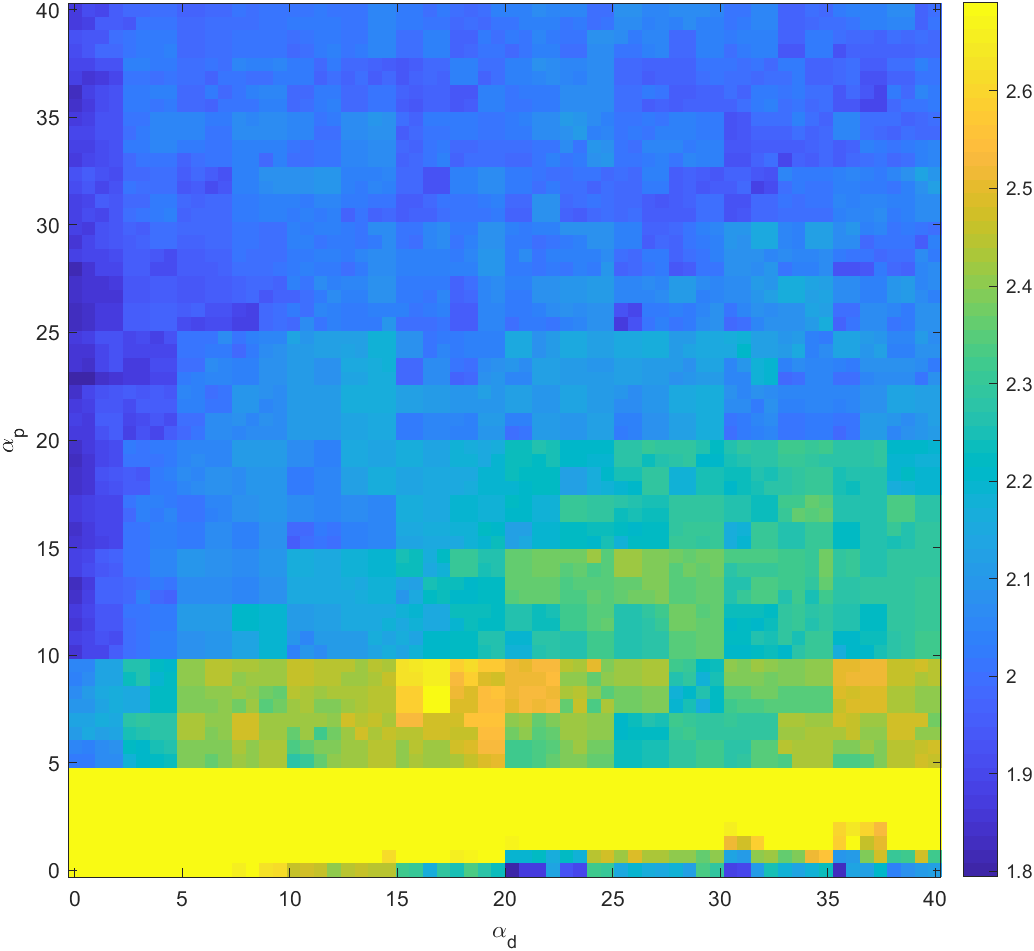}
\end{subfigure}
\begin{subfigure}{0.3\textwidth}
\centering
\includegraphics[width=\textwidth]{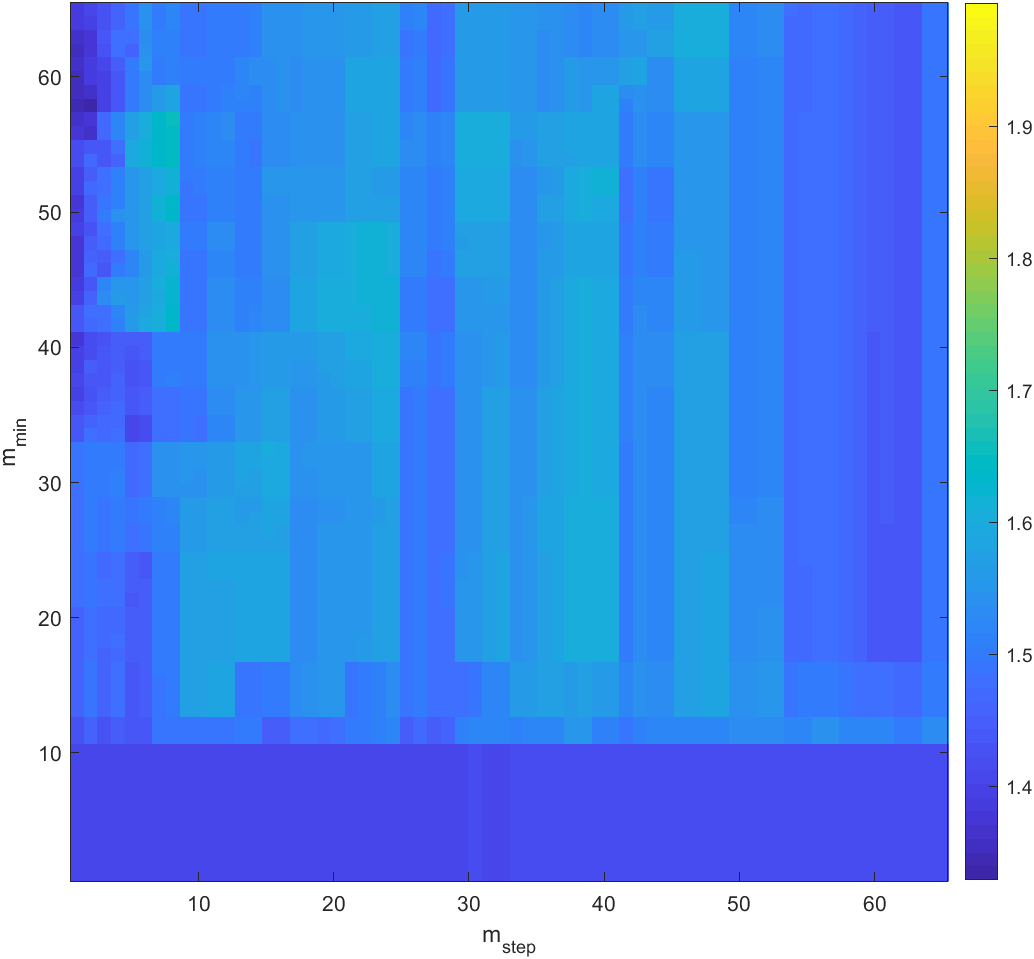}
\end{subfigure}
\caption{Single cycle of the optimization sequence for $m_{\text{init}} = 10$ in the $(\alpha_p > 0, \alpha_d > 0)$-quadrant.}
\label{fig:2q_10_qts}
\end{wrapfigure}

Figure~\ref{fig:2q_10_qts} shows a range of parameters with $\alpha_p$ close to zero, for which PD-GMRES performance is poor. This can be related to Proposition~\ref{prop:2} which would guarantee a good rate of convergence if the restart parameter $m$ does not change and $\frac{2\alpha_p}{\alpha_d}$ is large. For very large $\alpha_d$ one can find good convergence again. As those values would correspond to poor convergence in the case of $m$ not changing, the fact that one finds good convergence suggests that here $m$ is actually pushed to decrease by the controller, giving convergence through the reset condition similarly to the behavior in the original $(\alpha_p < 0, \alpha_d > 0)$-quadrant.

Optimal parameters for $m_{\text{init}} = 30$ were found to be $m_{\min} = 10, m_{\text{step}} = 5, \alpha_p = 0, \alpha_d = 21.875$. Since $m_{\min}$ and $m_{\text{step}}$ did not change from their initial choices during their optimization step, we stop the optimization sequence after one cycle. Optimal parameters for $m_{\text{init}} = 10$ are $m_{\min} = 57, m_{\text{step}} = 2, \alpha_p = 22.5, \alpha_d = 0.625$, after one optimization cycle. Due to poor performance (both in terms of the final parameters not performing well and in terms of the actual optimization requiring more time than it did in the original quadrant) no further optimization cycles are calculated.

As an example of the kind of behavior one can find in this quadrant, we examine \textit{circuit\_2} in Figure~\ref{fig:2q_circuit_2}. The top-left subfigure shows the quadtree, revealing good performance for both $\alpha_p \gg 0, \alpha_d \gtrsim 0$ and for $\alpha_d \gg 0, \alpha_p \gtrsim 0$. This is unusual, as most matrices show good performance for just one of these parameter regions.
\begin{wrapfigure}{r}{0.6\textwidth}
\begin{subfigure}{0.3\textwidth}
\centering
\includegraphics[width=\textwidth]{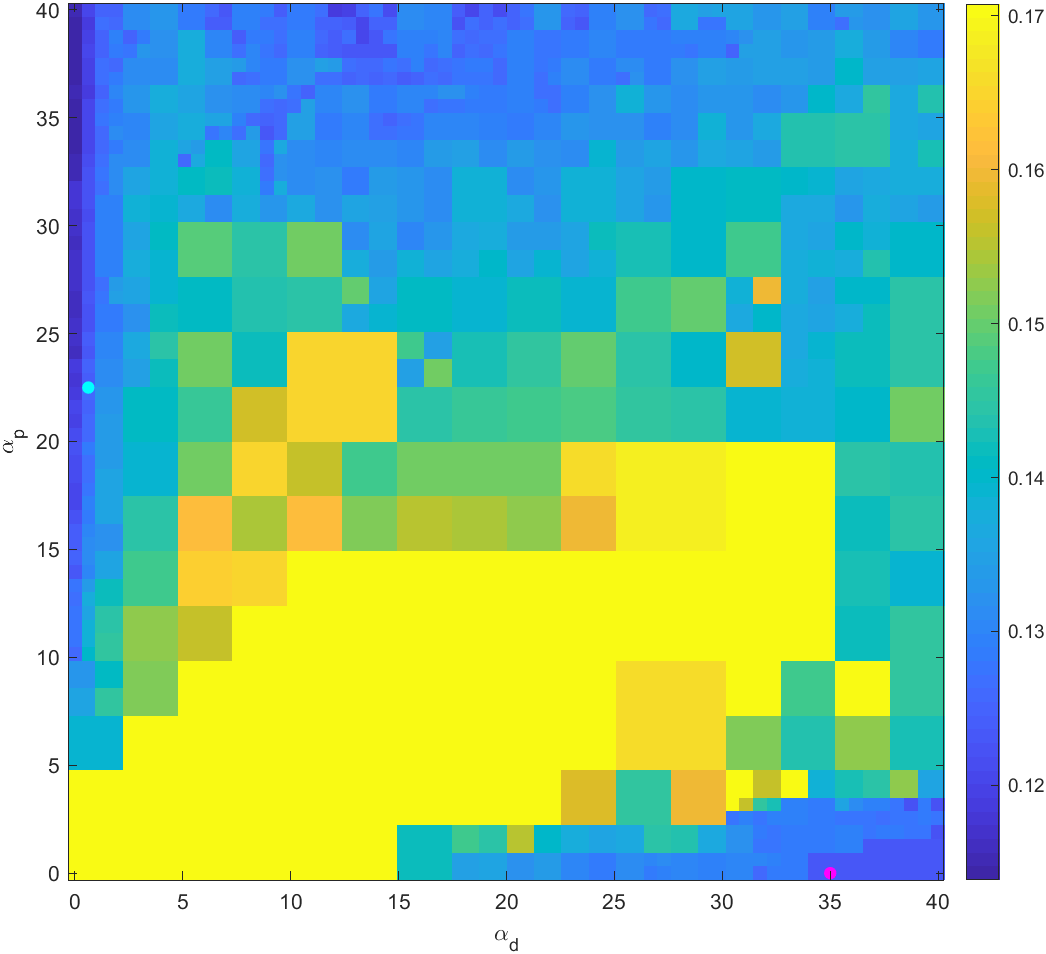}
\end{subfigure}
\begin{subfigure}{0.3\textwidth}
\centering
\includegraphics[width=\textwidth]{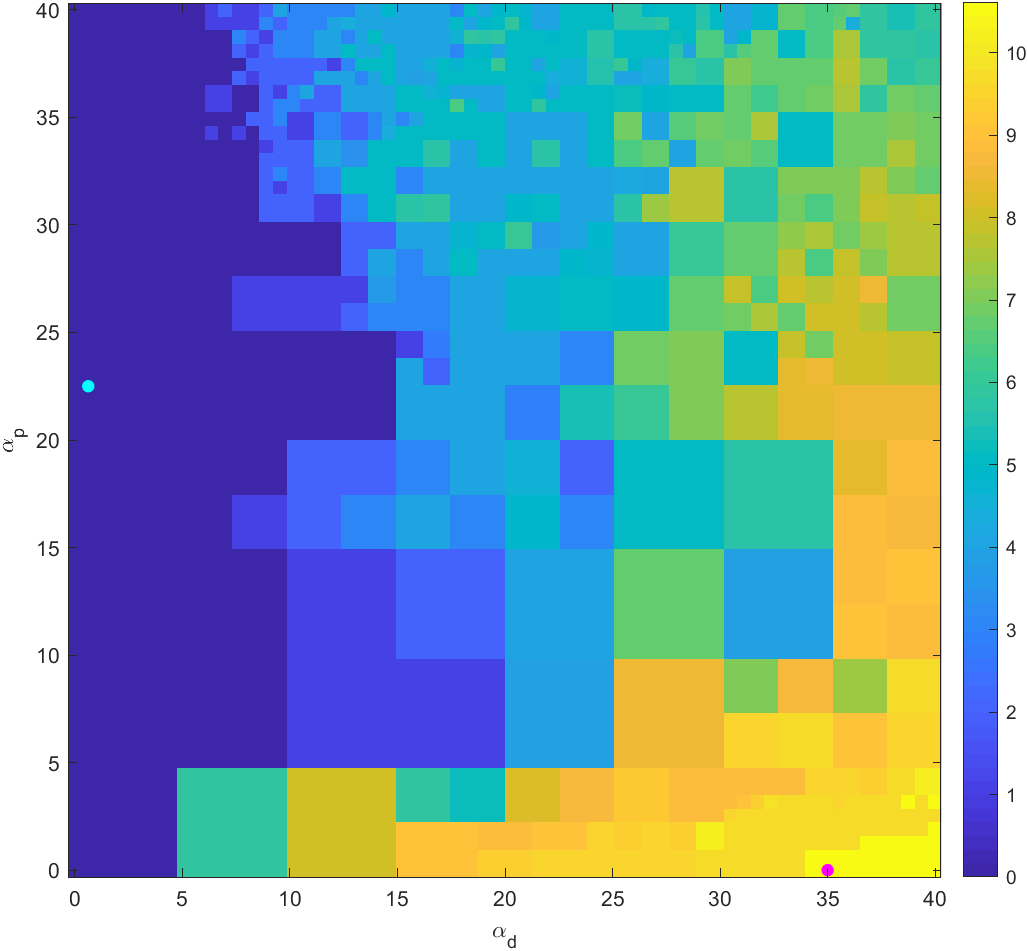}
\end{subfigure}
\begin{subfigure}{0.3\textwidth}
\centering
\includegraphics[width=\textwidth]{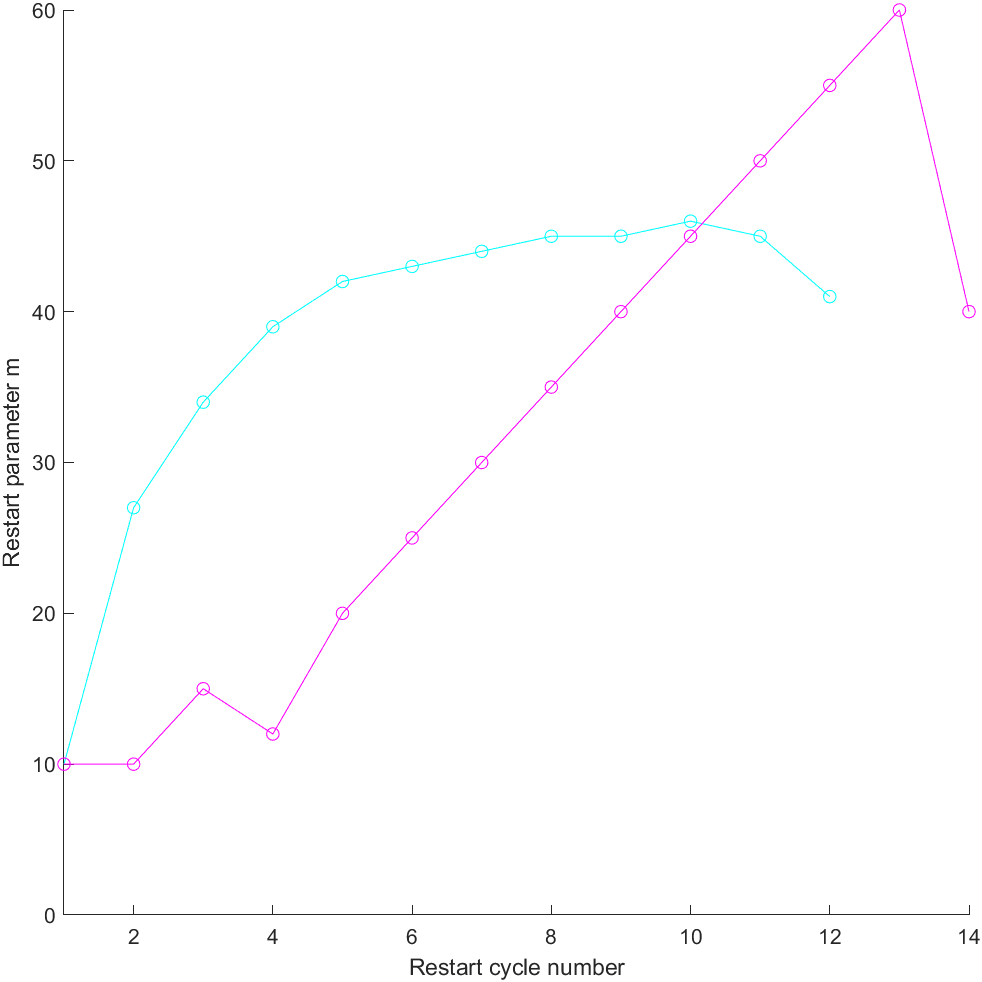}
\end{subfigure}
\begin{subfigure}{0.3\textwidth}
\centering
\includegraphics[width=\textwidth]{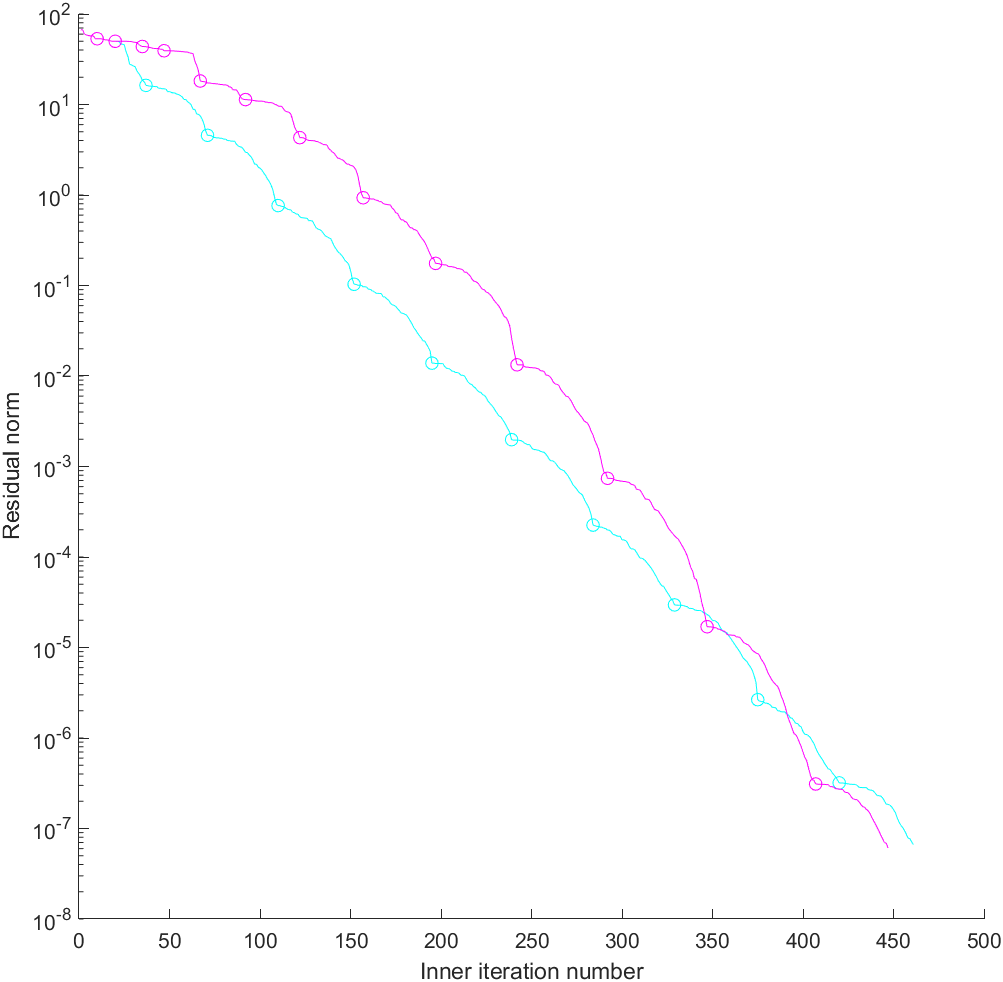}
\end{subfigure}
\caption{Quadtree, minima, restart parameters and residual norms for \textit{circuit\_2} in the $(\alpha_p > 0, \alpha_d > 0)$-quadrant.}
\label{fig:2q_circuit_2}
\end{wrapfigure}
The top-right subfigure shows the number of times the reset condition in the PD-GMRES algorithm is triggered by the calculated restart parameter $m$ being less than $m_{\min}$. 
As expected, for $\alpha_p \gg 0, \alpha_d \gtrsim 0$ the restart parameter grows and is never less than $m_{\min}$, while for $\alpha_d \gg 0, \alpha_p \gtrsim 0$ we see $m < m_{\min}$ at almost every restart. This behavior is illustrated in the bottom-left subfigure, which shows the evolution of the restart parameter $m$. The two different types of behavior can be clearly seen, and the cyan iteration shows nearly constant values for $m$ which comes close to the condition described in Proposition~\ref{prop:2}. Finally, the bottom-right subfigure shows the residual norm during the iteration. The two iterations have nearly identical runtime ($0.1444$ for cyan versus $0.1406$ for magenta), which shows that in principle one can expect either parameter region to offer good performance.

When analyzing individual matrices, we find rather localized minima for many of them (rather than large areas of well-performing parameters). However, comparing the individual optimal values for $\alpha_p$ and $\alpha_d$ for each matrix against the performance of the optimized PD-GMRES gives a geometric mean of $0.9005$ for $m_{\text{init}} = 10$ and $0.9113$ for $m_{\text{init}} = 30$. These values are worse than the one found in the $(\alpha_p < 0, \alpha_d > 0)$-quadrant, which is $0.8514$. Excluding two outliers (\textit{cavity10} and \textit{sherman5}), for which the $(\alpha_p > 0, \alpha_d > 0)$-quadrant offers no well-performing values does improve these geometric means to about $0.83$, but this still gives no indication that local minima found in this quadrant actually perform significantly better than those found in the original $(\alpha_p < 0, \alpha_d > 0)$-quadrant, which means that the good rate of convergence theoretically found in the case of Proposition~\ref{prop:2} appears uncommon to occur in practice.

\begin{wraptable}{l}{0.4\textwidth}
\centering
\begin{tabular}{|c|c|c|}
	\hline
	& \multicolumn{2}{c|}{Matrix set}\\
	\hline
	$m_{\text{init}}$& training & test \\
	\hline
	$10$ & $1.6654$ & $1.2793$\\
	\hline
	$30$ & $1.2645$ & $1.4077$\\
	\hline
\end{tabular}
\caption{Geometric means for PD-GMRES optimized in the $(\alpha_p > 0, \alpha_d > 0)$-quadrant, relative to the optimized PD-GMRES with $\alpha_p < 0$.}
\label{tab:2q_table}
\end{wraptable}

Table~\ref{tab:2q_table} shows that the best-performing PD-GMRES found in the  $(\alpha_p > 0, \alpha_d > 0)$-quadrant for $m_{\text{init}} = 10$ actually performs better for the test matrices compared to the training matrices. However, it is still inferior and the individual matrix data suggests that this is a general trend, i.e., while there are some extreme outliers, there are only a few matrices where the PD-GMRES with $\alpha_p > 0$ shows a significant performance increase, and a much greater number of matrices where it performs slightly or notably worse.

It must also be noted that convergence is not always achieved during the quadtree evaluation, specifically in the large yellow area in Figure~\ref{fig:2q_10_qts} for small $\alpha_p$ we often failed to find convergence in a reasonable time span. As parameters in this area end up not being evaluated further this is not a problem for the algorithm, except for increasing its runtime.

\subsection{An algorithmic extension for PD-GMRES} \label{subsec:extend}
When introducing the training matrices, cf.~Table~\ref{tab:training_matrices}, we discarded three matrices for which both GMRES and PD-GMRES were very slow to converge and thus would have been time-intensive to include in the optimization procedure. Two of these matrices, \textit{sherman3} and \textit{wang4}, do not even come close to convergence in practical time spans (the residual norm is found to still be in the order of $10^0$ after $60$ seconds of GMRES or PD-GMRES iteration). This is likely due to the very large condition number of \textit{sherman3}, which could be improved by using preconditioning, and the rather large size and number of non-zeros for \textit{wang4} which increase the runtime for GMRES. When using PD-GMRES for these matrices the algorithm ends up using very large restart parameters $m$, due to the reset condition in Algorithm~\ref{alg:pd_controller} being unrestrained in terms of how large $m$ can get. While this behavior is necessary to have a theoretical guarantee of convergence (cf.~Proposition~\ref{prop:1}), it may result in poor performance.

For the third discarded matrix, \textit{ex40}, we did find convergence when using PD-GMRES. To improve the performance for this matrix we introduce an extension of the algorithm, using an additional parameter $m_{\max}$.

\begin{algorithm}
\caption{Extended reset condition}
\label{alg:pdgmres_extended}
\begin{algorithmic}
\STATE \hspace*{-\algorithmicindent}\textbf{Parameters:} $m_{\text{init}}, m_{\min}, m_{\text{step}}, m_{\max}$
\REQUIRE{$m_{j+1}, c$}
\ENSURE{$m_{j+1}, c$}
\IF{$m_{j+1} < m_{\min}$}
\STATE{$c = c+1$}
\STATE{$m_{j+1} = m_{\text{init}}+c\cdot m_{\text{step}}$}
\ENDIF
\IF{$m_{j+1} > m_{\max}$}
\STATE{$c = 0$ \hfill\COMMENT{Re-initialize reset counter}}
\STATE{$m_{j+1} = m_{\text{init}}$}
\ENDIF
\end{algorithmic}
\end{algorithm}

The modified reset condition (Algorithm~\ref{alg:pdgmres_extended}) constrains the restart parameter $m$ into the interval $m_{\min} \leq m \leq m_{\max}$. While this means that it is possible for the iteration to stagnate, as Proposition~\ref{prop:1} no longer applies, it can also provide better performance due to lower restart parameters being used. We test this by applying the extended algorithm to \textit{ex40}.

\begin{figure}
\begin{subfigure}{0.48\textwidth}
\centering
\includegraphics[width=\textwidth]{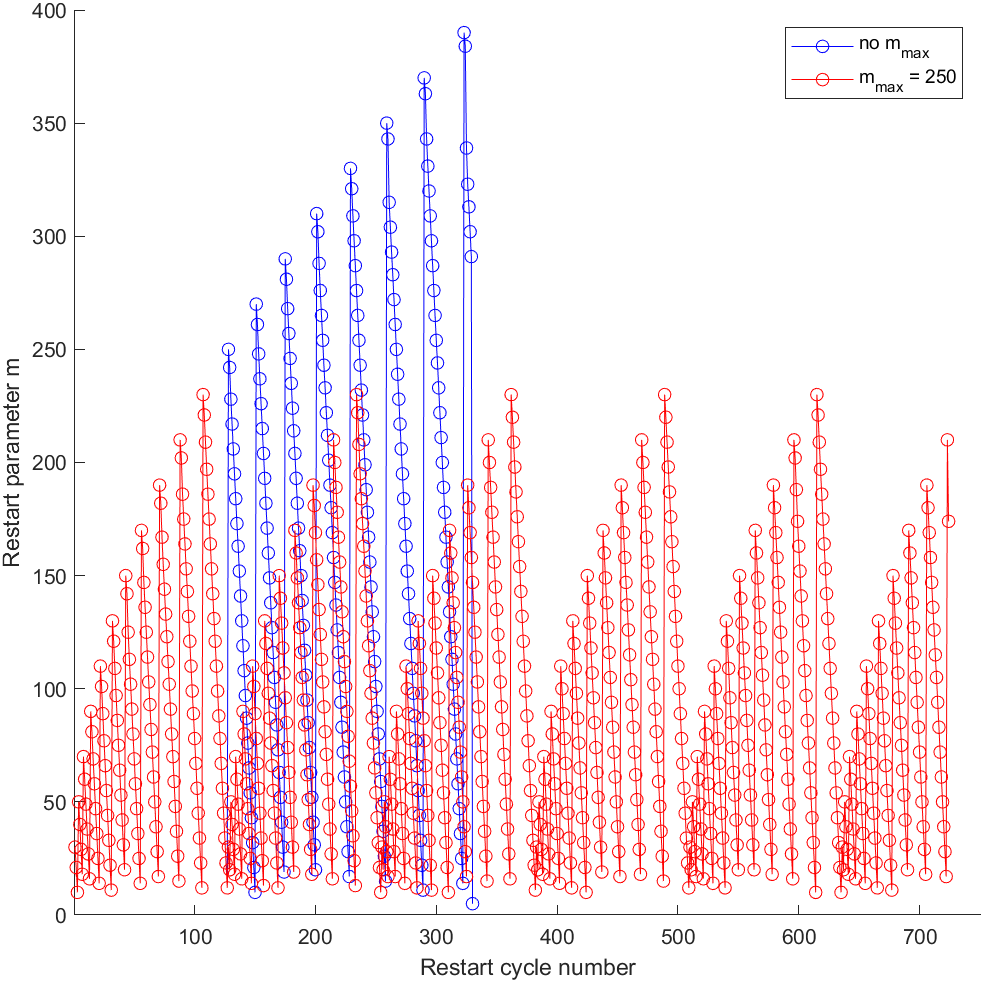}
\end{subfigure}
\begin{subfigure}{0.48\textwidth}
\centering
\includegraphics[width=\textwidth]{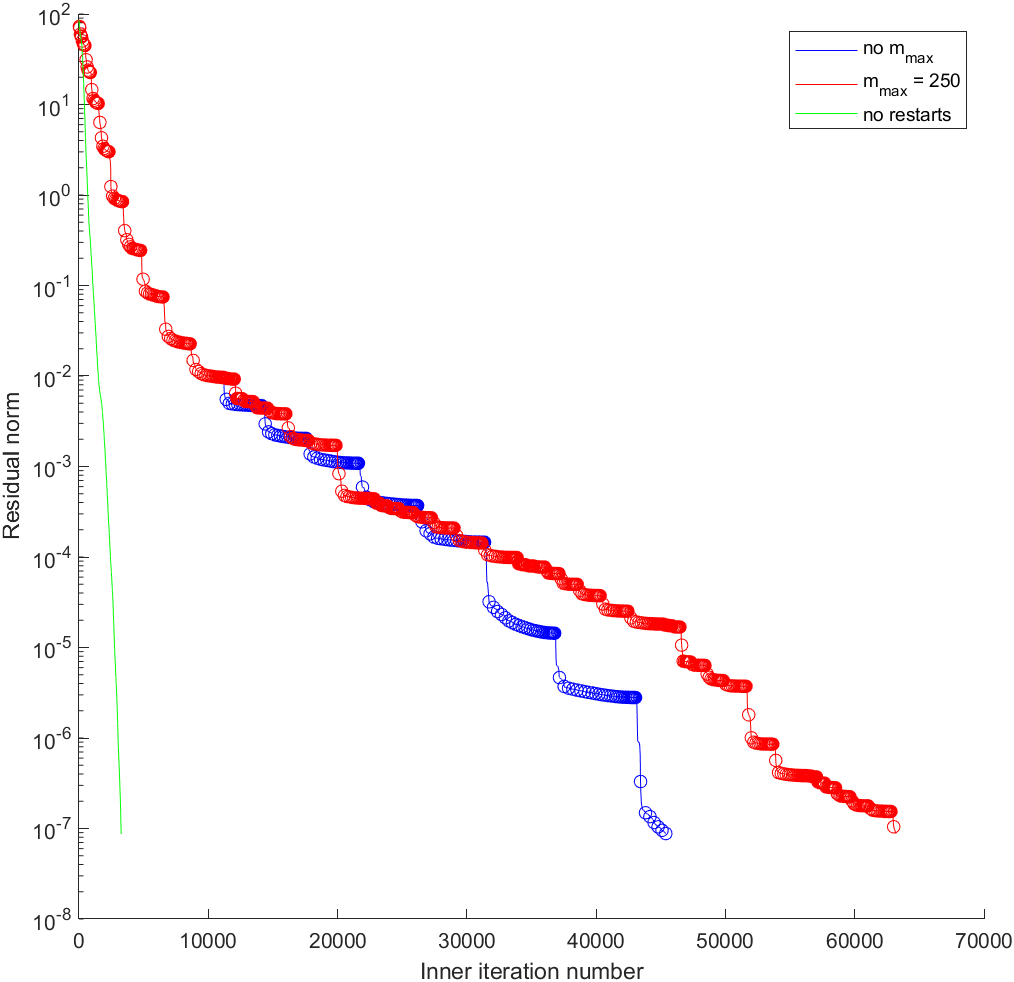}
\end{subfigure}
\caption{Restart parameter $m$ and residual norm for \textit{ex40} with and without $m_{\max} = 250$.}
\label{fig:ex40}
\end{figure}
Using $m_{\text{init}} = 30, m_{\min} = 10, m_{\text{step}} = 20, \alpha_p = -10, \alpha_d = 10$ (unoptimized, arbitrary choice), we compare the behavior of the original PD-GMRES to the extended algorithm with $m_{\max} = 250$ in Figure~\ref{fig:ex40}. We find better performance for the extended algorithm, with a runtime of $78.73$ seconds with $m_{\max} = 250$, $92.41$ seconds without $m_{\max}$ and $97.35$ for using GMRES without restarts.

This means that using $m_{\max}$ can improve performance for matrices for which PD-GMRES converges very slowly. It is important to note that choosing $m_{\max}$ too small can result in significantly worse performance for matrices which require large $m$ for good convergence. There does not seem to be an obvious method for determining how small $m_{\max}$ can be to optimize performance. It should also be noted that applying the extended algorithm to most of the training matrices does not result in any significant performance gain, as PD-GMRES usually converges quickly and thereby we have a short iteration length with the values for $m$ staying small enough to have good performance anyways.

\subsection*{Implementation details}
\revision{All data presented here was calculated using MATLAB version R2019a on an Intel i7-12700k CPU, except for Section~\ref{sec:large} for which an AMD Ryzen 7 9800X3D CPU was used. This means that the runtime values shown in Table~\ref{tab:large_gmres_comp} cannot be directly compared to those in Table~\ref{tab:training_gmres_comp} or Table~\ref{tab:test_gmres_comp}.}

Using MATLAB version 2022b or newer will give different results, as the \textit{mldivide} operator was optimized for small matrices in that version, resulting in different numerical behavior. Specifically, GMRES($m$) calls for $m \leq 16$ give different results in MATLAB version 2022b and newer. Over a full GMRES or PD-GMRES iteration this may result in noticeable differences in terms of iteration length and runtime.

\section{Conclusions} \label{sec:Conclusion}
We have presented a quadtree-based geometric method for solving two-dimensional optimization problems and have applied this method to the problem of finding optimal parameters for PD-GMRES, such that the runtime of PD-GMRES is minimized. Performing the optimization procedure on a set of training matrices we have found well-performing PD-GMRES parameters which were then validated on a set of test matrices. \revision{We expect that these parameters will perform well for other matrices and could be considered candidates for standard parameters to be used for PD-GMRES.} We have further confirmed the effectiveness of our optimization approach by applying it to a subset of training matrices which exhibited problematic behavior. This application has provided even better PD-GMRES performance and has shown that the quadtree method is a flexible tool for optimizing PD-GMRES parameters for sets of matrices. \revision{Moreover, a large matrix test has indicated the suitability of the obtained parameters for PD-GMRES in combination with preconditioning.} Employing the optimization procedure to a different parameter quadrant did not result in better performance. Attempts to find a simple classifier for the problematic matrices were unsuccessful. We have further introduced an extension of the PD-GMRES algorithm which uses an additional parameter to constrain the restart parameter in a fixed range. We have applied this modified algorithm to a specific matrix for which PD-GMRES has long runtime and have found improved performance.

We identify several directions for further investigations. 
As systems of linear equations are often preconditioned before solving, the effect of preconditioning on the evaluated matrix sets should be \revision{further} investigated. One specific question would be whether there exist any clear trends for how the PD-GMRES parameters should be changed when switching from an un-preconditioned system to the same system with a preconditioner. Further attempts could be made to classify the problematic matrices. As we have found well-performing PD-GMRES parameters for both the non-problematic and the problematic matrices, an efficient classifier would allow for an a-priori determination of which parameters should be used for a given matrix. However, it seems likely that there is no such universal classifier. Alternative criteria for the construction of the quadtree could be considered, as briefly outlined at the end of Section \ref{sec:quadtree}. The extended PD-GMRES algorithm using the additional parameter $m_{\max}$ should be further investigated. If some practical estimates for $m_{\max}$ can be found, the extended algorithm could be generally superior to the original. Here, matrices for which PD-GMRES performs very poorly should be of particular focus. It is also possible to apply the optimization procedure to PD-GMRES variants which make use of augmentation or deflation of the Krylov subspace to improve the rate of convergence \cite{Cabral2020}.

\section*{Acknowledgement}
Funded by Deutsche Forschungsgemeinschaft (DFG, German Research Foundation) under Germany’s Excellence Strategy (Project Number 390740016 -- EXC 2075) and the Collaborative Research Center 1313 (Project Number 327154368 -- SFB 1313). We acknowledge the support by the Stuttgart Center for Simulation Science (SC SimTech).

\bibliographystyle{abbrvnat}
{\bibliography{bibliography.bib}}

\section*{\revision{Appenix A: Total runtime for optimization}}
\revision{In this appendix we present an example of the total cost of our optimization procedure. For the three optimization cycles for the training matrices using $m_{init} = 10$ (cf. Figure~\ref{fig:optimization_sequence}) we have an approximate total runtime of $7330.3$ seconds. In Table~\ref{tab:cost_estimate} we show the runtime estimate which is computed in advance for each matrix, the total number of evaluations which is determined by this cost estimate, and the total runtime cost of these evaluations.}

\begin{table}
\revision{
    \centering
    \begin{tabular}{|c|c|c|c|}
    \hline
        Name & Single runtime estimate & Total evaluations & Total runtime\\ \hline
        add20 & 0.0342 & 2976 & 142.75 \\ \hline
        cavity05 & 0.1739 & 2976 & 367.53 \\ \hline
        cavity10 & 1.1450 & 528 & 390.35 \\ \hline
        cdde1 & 0.0095 & 2976 & 28.423 \\ \hline
        circuit\_2 & 0.1538 & 2976 & 521.12 \\ \hline
        fpga\_trans\_01 & 0.0418 & 2976 & 139.38 \\ \hline
        memplus & 1.7834 & 528 & 1195.1 \\ \hline
        orsirr\_1 & 0.0509 & 2976 & 181.50 \\ \hline
        orsreg\_1 & 0.0125 & 2976 & 62.667 \\ \hline
        pde2961 & 0.0202 & 2976 & 82.261 \\ \hline
        raefsky1 & 0.4391 & 2400 & 840.95 \\ \hline
        raefsky2 & 0.5579 & 2400 & 1302.7 \\ \hline
        rdb2048 & 0.0133 & 2976 & 48.420 \\ \hline
        sherman1 & 0.0334 & 2976 & 111.62 \\ \hline
        sherman4 & 0.0104 & 2976 & 31.089 \\ \hline
        sherman5 & 2.3297 & 528 & 1210.2 \\ \hline
        steam2 & 0.0682 & 2976 & 16.571 \\ \hline
        wang2 & 0.0772 & 2976 & 232.26 \\ \hline
        watt\_1 & 0.0502 & 2976 & 164.56 \\ \hline
        young3c & 0.0780 & 2976 & 260.78 \\ \hline \hline
        COMBINED & - & - & 7330.3 \\ \hline
    \end{tabular}
    \caption{\revision{The data presented is for the optimization for the training matrices using $m_{init} = 10$, summing total evaluations and runtime over all six steps of the three optimization cycles (cf. Figure~\ref{fig:optimization_sequence}). The single runtime estimate is computed in advance and based on it the structure of the quadtree is chosen which determines the total number of evaluations.\\
    Unlike all previous evaluations, the total runtime is the actual runtime of all completed PD-GMRES iterations and not the averaged heuristic which we use otherwise. This means that the total runtime is the actual time that the complete optimization procedure requires.}}
    \label{tab:cost_estimate}}
\end{table}

\end{document}